\newtheorem{theorem}{Theorem}[section]
\newtheorem{proposition}[theorem]{Proposition}
\newtheorem{lemma}[theorem]{Lemma}
\newtheorem{proof}{\textmd{\textit{Proof.}}}
\newtheorem{remark}[theorem]{Remark}
\newcommand{\qedd}{\hfill \Box}
\newcommand{\B}{\ensuremath{\mathbb{B}}}
\newcommand{\R}{\ensuremath{\mathbb{R}}}
\newcommand{\Sph}{\ensuremath{\mathbb{S}}}
\title{The geometry on the slope of a mountain}
\author{P. Chansri, P. Chansangiam and S. V. Sabau 
\footnote{Corresponding author}
}
\date{}
\begin{document}

\maketitle
\begin{abstract}
The geometry on a slope of a mountain is the geometry of a Finsler metric, called here the {\it slope metric}. We study the existence of globally defined slope metrics on surfaces of revolution as well as the geodesic's behavior. A comparison between Finslerian and Riemannian areas of a bounded region is also studied.
\end{abstract}
\let\thefootnote\relax\footnote{Miskolc Mathematical Notes, Vol. 21 (2020), No. 2, pp. 747–762.}
\section{Introduction}

\quad Finsler manifolds, that is $n$-dimensional smooth manifolds endowed with Finsler metrics, are natural generalization of the well-known Riemannian manifolds. The main difference is that the metric itself and all Finsler geometric quantities depend not only on the point $x \in M$ of the manifold, but also on the direction $y \in T_xM$, where $(x,y)$ are the canonical coordinates of the tangent bundle $TM$. This directional dependence reveals many  hidden geometrical features that are usually obscured by the quadratic form in the $y$-variable of a Riemannian metric. On the other hand, most of the geometrical properties of Finsler spaces are highly nonlinear, this is the case with the non-linear connection or the parallel displacement, making most of the traditional Riemannian methods unapplicable. 

It is well-known that one of the most important problems in differential geometry and calculus of variations is the time minimizing travel between two points on a Riemannian or Finsler manifold. The problem of finding these time minimizing paths goes back to Caratheodory (\cite{C}) and Finsler himself and can be directly related to the Hilbert's fourth problem (see \cite{AIM} for details).

An important insight in to the problem is due to Shen (\cite{S}) who related the Zermelo's navigation problem to the geometry of Randers metrics. Indeed, it is now clear that the time minimizing travel paths on a Riemannian manifold $(M,h)$ under the influence a mild wind $W\in TM$, $||W||_h<1$, are exactly the geodesics of a Randers metric $F=\alpha+\beta$ uniquely determined by the navigation data $(h,W)$ (see  \cite{BRS} for details).

Moreover, a singular solution of the Zermelo's navigation problem can be found in the case $||W||_h=1$, namely the geodesics of a Kropina metric (\cite{YS}). The Randers metrics $F=\alpha+\beta$ and the Kropina metrics $F=\frac{\alpha^2}{\beta}$  belong to a larger class of Finsler metrics called $(\alpha, \beta)$- metrics since  they are obtained by deformations of a Riemannian metric by means of a linear 1-form $\beta=b_i(x)y^i$ on $TM$. The common characteristic is that they are obtained by rigid translation of a Riemannian unit sphere by a vector field $W$. The local and global geometries of these Finslerian metrics have been extensively studied (\cite{SSY}).

Another interesting but much less studied problem is the Matsumoto's slope metric $F=\frac{\alpha^2}{\alpha-\beta}$.

Based on a letter of P. Finsler (1969), M. Matsumoto considered the following problem:\\

{\it Suppose a person walking on a horizontal plane with velocity $c$, while the gravitational force is acting perpendicularly on this plane. The person is almost ignorant of the action of this force. Imagine the person walks now with same velocity on the inclined plane of angle $\varepsilon$ to the horizontal sea level. Under the influence of gravitational forces, what is the trajectory the person should walk in the center to reach a given destination in the shortest time?}\\

Based on this, he has formulated the following
{\it Slope principle} (\cite{M1},\cite{M2}).

{\it With respect to the time measure, a plane $(\pi)$ with an angle $\varepsilon$ inclination can be regarded as a Minkowski plane. The indicatrix curve of the corresponding Minkowski metric is a lima\c{c}on, contained in this plane, given by
		\begin{equation*}
			r=c+a\cos\theta,
		\end{equation*}		
in the polar coordinates $(r,\theta)$ of $(\pi)$, whose pole is the origin $O$ of $(\pi)$ and the polar axis is the most steepest downhill direction, 
 where $a=\frac{g}{2}\sin\varepsilon$, and $g$ is the acceleration constant.

From calculus of variations it follows that for a hiker walking the slope of a mountain under the influence of gravity, the most efficient time minimizing paths are not the Riemannian geodesics, but the geodesics of the slope metric $F=\frac{\alpha^2}{\alpha-\beta}$.
}

More recently, it was shown that the fire fronts evolution can be modeled by Finsler merics of slope type and their generalizations (see \cite{Ma}). In this setting the geodesics befaviour and the cut locus have real interpretations and concrete applications for the firefighters activity as well as preventing of wild fires.  All these applications show that slope metrics deserve a more detalied study making in this way the motivation of the preseant paper. 

Despite the quite long existence of slope metrics, their study is limited mainly to the study of their local geometrical properties, while the global existence of such metrics and other geometrical properties are conspicuously absent. 

Our study leads to the following novel findings:
\begin{enumerate}
\item we show that there are many examples of surfaces admitting globally defined slope metrics;
\item we describe in some detail the geometry of a surface of revolution endowed with a slope metric. In special we study the geodesics behaviour, Clairaut relation, etc.;
\item we compare the Finslerian areas (by using the Busemann-Hausdorff and the Holmes-Thompson volume forms, respectively) with the Riemannian one. 
\end{enumerate} 

\bigskip

Here is the contents of the present paper. We recall in Section \ref{sec_2} the construction of the slope metric on a surface $M\to\R^3$ based on Matsumoto's work pointing out the strongly convexity condition such a surface must satisfy in order to admit a slope metric (Proposition \ref{prop_convex f}).

Based on these we show that there exist smooth surfaces $M\to\R^3$ that admit globally defined slope metrics (Section \ref{sec_3}). All the examples known until now were local one. This is for the first time the existence of global slope metrics is shown.

In Section \ref{sec_4} we specialize to surfaces of revolution admitting globally defined slope metrics. We study in Section \ref{sec_41} general Finsler surfaces of revolution and give a new form of the Clairaut relation in Theorem \ref{thm_F-Clairaut}. This relation is very important showing that the geodesic flow of Finsler surfaces of revolution is integrable despite its highly nonlinear character. After solving the algebraic system \eqref{algebraic} one can write the geodesic equations in an explicit form, however solving this system is not a trivial task. Next, in Section \ref{sec_43}, we construct explicitly the slope metric on a surface of revolution and show that there are many such surfaces admitting globally defined strongly convex slope metrics, see Theorem \ref{thm_new} for a topological classification and examples. These are actually Finsler surfaces of revolution (see Theorem \ref{thm_F_surface of revolution}).

We turn to study of geodesics of slope metrics on a surface of revolution in Section \ref{sec_44} by explicitly writing the geodesic equations as second order ODEs in \eqref{F geodesic equations}. Some immediate consequences are given (see Proposition \ref{prop_meri}, \ref{prop_para}). The meridians are $F$-geodesics, but parallels are not. Moreover, a slope metric cannot be projectively flat or projectively equivalent to Riemannian metric $\alpha$ (Proposition \ref{prop_proj}).
We show the concrete form of the Clairaut relation for this case in Theorem \ref{thm_Clairaut_p2}, and some consequence of it in Proposition \ref{prop_F-Clairaut}. 

Finally, we compare the area of a bounded region $D$ on the surface of revolution $M$ when measured by the canonical Riemannian, Busemann-Hausdorff, and Holmes-Thompson volume measures, respectively (see Theorems \ref{thm: area comparison} and \ref{thm: area comparison2}). 

Other topics in the geometry of slope metrics like the study of the flag curvature, global behaviour of geodesics, and cut locus, etc. will be considered in forthcoming research.
 
\bigskip 

\noindent
{\it Acknowledgments}:  The authors are grateful to Prof. H. Shimada and Prof. M. Tanaka for many useful discussion. We thank to R. Hama as well for his help.

\section{The slope metric}\label{sec_2}

In this section we will construct the slope metric from the movement on a Riemannian surface under the influence of the gravity attraction force.

A hiker is walking on the surface $M$, seen now as the slope of a mountain, with speed $c$ an level ground, along a path that makes an angle $\varepsilon$ with the steepest downhill direction.

Let us consider the surface $M$ embedded in the Euclidean space $\R^3$ with the parametrization
\begin{equation}
M \to \mathbb{R}^3,\quad (x,y)\mapsto (x,y,z=f(x,y)),
\end{equation}
where $f:\R^2\to\R$ is a smooth function (further conditions will be added later), that is $M$ is the graph of $z=f(x,y)$. It is elementary to see that the tangent plane $\pi_p=T_pM$ at a point $p=(x,y,f(x,y))\in M$ is spanned by
$\partial_x:=(1,0,f_x),\quad \partial_y:=(0,1,f_y)$, 
where $f_x$ and $f_y$ are the partial derivatives of $f$ with respect to $x$ and $y$, respectively. The induced Riemannian metric from $\R^3$ to the surface $M$ is

\begin{equation} \label{matrix aij}
a_{ij}=
\left( \begin{array}{cc}
1+f_x^2&f_xf_y\\
f_xf_y&1+f_y^2
\end{array} \right).
\end{equation}

We will construct the slope metric on the surface $M$ by considering
\begin{itemize}
\item the plane $x,y$ to be the sea level;
\item the $z\geq 0$ coordinate to be the altitude above the sea level;
\item the surface $M:z=f(x,y)$ to be the slope of the mountain.
\end{itemize}

At any point $p\in M$ we construct a Riemannian orthonormal frame $\{e_1,e_2\}$ in $T_pM$ by choosing $e_1$ to point on the steepest downhill direction of $T_pM$. Indeed, it is elementary to see that
\begin{equation}
\begin{split}
e_1&=-\frac{1}{\sqrt{(1+f_x^2+f_y^2)(f_x^2+f_y^2)}}(f_x\partial_x+f_y\partial_y)\\
e_2&=\frac{1}{\sqrt{f_x^2+f_y^2}}(-f_y\partial_x+f_x\partial_y).
\end{split}
\end{equation}	
is a such orthonormal frame.

With these notations, the Matsumoto's slope principle is telling us that the locus of unit time destinations 
of the hiker on the plane $T_pM$ is given by the 
lima\c{c}on
\begin{equation}
r=c+a\cdot \cos\theta,
\end{equation}		
where $(r,\theta)$ are the polar coordinates in $T_pM$, $c$ is the speed of the hiker on the ground level $xy$, and $a=\frac{g}{2}\cdot \sin \varepsilon$ is the gravity (of magnitude $g$) component along the steepest downhill direction.
 The Finsler norm $F$ having this lima\c{c}on as indicatrix measures time travel on the surface $S$.
 
Taking into account the parametrization
\begin{equation}
X(t)=(c+a\cos t)\cdot \cos t,\quad 
Y(t)=(c+a\cos t)\cdot \sin t,\quad t\in[0,2\pi)
\end{equation}
it is easy to obtain the implicit equation of the lima\c{c}on
\begin{equation}\label{implicit limacon}
X^2+Y^2=c\sqrt{X^2+Y^2}+a\cdot X,
\end{equation}		
where $X,Y$ are the coordinates with respect to the orthonormal frame $\{e_1,e_2\}$ in $T_pM$.

By Okubo's method (\cite{AIM}), we get the Minkowski norm
\begin{equation*}
F(X,Y)=\frac{X^2+Y^2}{c\sqrt{X^2+Y^2}+a\cdot X},
\end{equation*}
and by converting to the canonical coordinates $(x,y,\dot{x},\dot{y})$ of $TM$ we obtain the slope metric
\begin{equation*}	
\begin{split}
F(x,y,\dot{x},\dot{y})&=\frac{\alpha^2}{c\alpha-\frac{g}{2}\beta},
\end{split}		
\end{equation*}	
where		
\begin{equation}\label{alpha,beta}
\begin{cases}
\alpha&=\sqrt{(1+f_x^2)\dot{x}^2+2f_xf_y\dot{x}\dot{y}+(1+f_y^2)\dot{y}^2}\\
\beta&= f_x\dot{x}+f_y\dot{y}.
\end{cases}
\end{equation}

For the sake of simplicity we can choose $c:=\frac{g}{2}$ and by multiplication with $c$ we obtain the usual form of the slope metric

\begin{equation}\label{slope metric}
F=\frac{\alpha^2}{\alpha-\beta}
\end{equation}
(see \cite{AIM}, \cite{BR}, \cite{M1}).

One can now easily see that the slope metric belongs to the class of $(\alpha,\beta)$-metrics, that is Finsler metrics with fundamental function $F=F(\alpha,\beta)$, where $\alpha^2=a_{ij}y^iy^j$ is a Riemannian metric $(a_{ij})$ on $M$ and $\beta=b_i(x)y^i$ is a linear form in $TM$. For the general theory of $(\alpha,\beta)$-metrics one is referred to \cite{BCS2} or \cite{M1991}.

By writing $F=F(\alpha,\beta)=\alpha\cdot\phi(\mathfrak s)$, where $\mathfrak s=\frac{\beta}{\alpha}$, the Hessian $g_{ij}:=\frac{1}{2}\frac{\partial^2 F^2}{\partial y^i \partial y^j}$ reads

\begin{equation*}
g_{ij}=\rho a_{ij}+\rho_0b_ib_j+\rho_1(b_i\alpha_j+b_j\alpha_i)-\rho\rho_1\alpha_i\alpha_j,
\end{equation*}
where $\alpha_i:=\frac{\partial\alpha}{\partial y^i}$, and
\begin{equation*}
\rho=\phi^2-\mathfrak s\phi\phi',\quad \rho_0=\phi\phi''+\phi'\phi',\quad \rho_1=-\mathfrak s(\phi\phi''+\phi'\phi')+\phi\phi'.
\end{equation*}

It is known from Shen's work (see \cite{BCS2} or \cite{S}) that $(\alpha,\beta)$ type Finsler metrics are strongly convex whenever the function $\phi(\mathfrak s)$ satisfies
\begin{equation*}
\begin{split}
\phi(\mathfrak s)&>0\\
\phi(\mathfrak s)-\mathfrak s\phi'(\mathfrak s)&>0\\
\phi''(\mathfrak s)&\geq 0,\quad \text{for}\quad \mathfrak s<b.
\end{split}
\end{equation*}

In the case of the slope metric, we have $\phi(\mathfrak s)=\frac{1}{1-s}$ and the relations above are clearly satisfied for $\mathfrak s<\frac{1}{2}$, that is $\beta<\frac{1}{2}\alpha$. It follows

\begin{proposition}\label{prop_convex f}
A surface $M\to\R^3$, $(x,y)\mapsto (x,y,z=f(x,y))$ admits a strongly convex slope metric $F=\frac{\alpha^2}{\alpha-\beta}$, where $\alpha,\beta$ are given in \eqref{alpha,beta}, if and only if 

\begin{equation}\label{convex condition}
f^2_x+f^2_y<\frac{1}{3}
\end{equation}
where $f_x$, $f_y$ are partial derivatives of $f$.

\end{proposition}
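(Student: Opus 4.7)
The plan is to reduce the strong convexity of $F=\alpha^2/(\alpha-\beta)$ to a pointwise inequality on the Riemannian norm $\|\beta\|_\alpha$ of the 1-form $\beta$, and then compute $\|\beta\|_\alpha$ explicitly in terms of $f_x,f_y$ using the induced metric $a_{ij}$ from \eqref{matrix aij}. The criterion stated just before the proposition says that an $(\alpha,\beta)$-metric $F=\alpha\,\phi(\mathfrak{s})$ is strongly convex iff $\phi(\mathfrak{s})>0$, $\phi(\mathfrak{s})-\mathfrak{s}\phi'(\mathfrak{s})>0$, and $\phi''(\mathfrak{s})\ge 0$ hold for every $\mathfrak{s}$ in the range $(-b,b)$ that $\beta/\alpha$ can attain, where $b=\sup_{y\ne 0}\beta(y)/\alpha(y)=\|\beta\|_\alpha$. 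So the proof reduces to two scalar computations.

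First, for the slope metric $\phi(\mathfrak s)=1/(1-\mathfrak s)$, I would differentiate to get $\phi'=1/(1-\mathfrak s)^2$ and $\phi''=2/(1-\mathfrak s)^3$, and then simplify
\begin{equation*}
\phi(\mathfrak s)-\mathfrak s\phi'(\mathfrak s)=\frac{1-2\mathfrak s}{(1-\mathfrak s)^2}.
\end{equation*}
Among the three conditions the positivity and the $\phi''\ge0$ inequality are both satisfied for all $\mathfrak s<1$, so the binding restriction is the middle one, which forces $\mathfrak s<\tfrac12$. Hence $F$ is strongly convex if and only if $\|\beta\|_\alpha<\tfrac12$, i.e. $a^{ij}b_ib_j<\tfrac14$, with $b_i=(f_x,f_y)$.

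Second, I would compute the inverse of $a_{ij}=\delta_{ij}+f_if_j$. Since $a_{ij}$ is a rank-one perturbation of the identity, the Sherman--Morrison formula (or direct verification) gives
\begin{equation*}
a^{ij}=\delta^{ij}-\frac{f_if_j}{1+f_x^2+f_y^2}.
\end{equation*}
Plugging in $b_i=f_i$ and contracting produces the clean expression
\begin{equation*}
\|\beta\|_\alpha^{2}=a^{ij}f_if_j=\frac{f_x^2+f_y^2}{1+f_x^2+f_y^2}.
\end{equation*}

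Finally, I would solve the inequality $\|\beta\|_\alpha^2<\tfrac14$: it rearranges to $4(f_x^2+f_y^2)<1+f_x^2+f_y^2$, i.e. $f_x^2+f_y^2<\tfrac13$, proving the claimed \eqref{convex condition} in both directions. None of the steps present a real obstacle; the only mildly delicate point is making sure one uses the correct sup-norm $b=\|\beta\|_\alpha$ (so that the condition is pointwise-uniform over directions in $T_pM$), rather than checking the three inequalities only for a fixed tangent vector.
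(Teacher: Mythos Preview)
Your argument is correct, but it takes a slightly different route from the paper's own proof. The paper argues the equivalence $\beta<\tfrac12\alpha\Leftrightarrow f_x^2+f_y^2<\tfrac13$ directly at the level of tangent vectors: for $(\Rightarrow)$ it substitutes the test direction $(\dot x,\dot y)=(f_x,f_y)$, and for $(\Leftarrow)$ it applies the Euclidean Cauchy--Schwarz inequality to $(f_x,f_y)$ and $(\dot x,\dot y)$, then uses the identity $\alpha^2=\dot x^2+\dot y^2+\beta^2$ to close the estimate. You instead pass through the dual norm, computing $b^2=a^{ij}b_ib_j$ via the Sherman--Morrison inverse of $a_{ij}=\delta_{ij}+f_if_j$ and solving $b^2<\tfrac14$. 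Your route is exactly the alternative the paper records in the Remark immediately following the proposition (item~(3)); it is slightly cleaner conceptually since it avoids having to guess a test vector, while the paper's argument is more self-contained in that it does not invoke the identity $\sup_{y\neq 0}\beta(y)/\alpha(y)=\sqrt{a^{ij}b_ib_j}$.
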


This proposition is saying that $\beta<\frac{1}{2}\alpha$ is equivalent to the condition \eqref{convex condition}, for  $\alpha,\beta$ given in \eqref{alpha,beta}.

Indeed, if we assume $\beta<\frac{1}{2}\alpha$ is true for any 
 $(x,y,\dot x, \dot y)\in TM$, then by taking $(\dot x, \dot y)$ to be $(b_1,b_2)=(f_x,f_y)$ in this inequality, \eqref{convex condition} follows immediately. 
 Conversely, assume that \eqref{convex condition} is true everywhere on $S$ and prove $\beta<\frac{1}{2}\alpha$ for any $(x,y,\dot x, \dot y)\in TM$. The idea is to use the Cauchy-Schwartz inequality in the Euclidean plane for the vectors $(f_x,f_y)$ and $(\dot x,\dot y)$, that is 
 $$
 f_x\dot x+f_y\dot y\leq \sqrt{(f_x^2+f_y^2)(\dot x^2+\dot y^2)}
 $$
 and by using the hypothesis \eqref{convex condition} it follows the equivalent condition 
 \begin{equation}\label{equiv conv cond}
  f_x\dot x+f_y\dot y < \sqrt{\frac{1}{3}(\dot x^2+\dot y^2)}.
 \end{equation}
From here $\beta<\frac{1}{2}\alpha$ follows immediately. 
 
\begin{remark}
\begin{itemize}
\item[(1)] This formula was obtained for the first time in \cite{BR} and the proof above is based on the idea in \cite{BR}.
\item[(2)] The convexity formula above is obviously equivalent to the usual convexity condition of the lima\c{c}on $c>2a$.
\item[(3)] Taking into account the inverse matrix $(a^{ij})$ of \eqref{matrix aij}, it can be seen that $b^2:=a^{ij}b_ib_j$ is given by 
$$
b^2=\frac{f_x^2+f_y^2}{1+f_x^2+f_y^2},
$$
and from \eqref{convex condition} it follows that the strongly convexity of the indicatrix is equivalent to 
$$
b<\frac{1}{2}.
$$
\end{itemize}
\end{remark}

Observe that for the slope metric \eqref{slope metric} we have

\begin{equation*}
\rho=\frac{2\mathfrak s-1}{(\mathfrak s-1)^3}=\frac{\alpha^2(\alpha-2\beta)}{(\alpha-\beta)^3};\quad \rho_0=\frac{3}{(\mathfrak s-1)^4}=\frac{3\alpha^4}{(\alpha-\beta)^4};\quad \rho_1=\frac{1-4\mathfrak s}{(\mathfrak s-1)^4}=\frac{\alpha^3(\alpha-4\beta)}{(\alpha-\beta)^4}
\end{equation*}
and hence
\begin{equation*}
\begin{split}
g_{11}&=\rho\cdot a_{11}+\rho_0+2\rho_1\alpha_1-\rho
\cdot\rho_1(\alpha_1)^2\\
g_{12}&=(1-\rho\alpha_1)\rho_1\alpha_2\\
g_{22}&=\rho\cdot a_{22}-\rho\cdot\rho_1(\alpha_2)^2.
\end{split}
\end{equation*}


\section{Examples of slope metrics}\label{sec_3}

One might be tempted to think that due to the convexity condition \eqref{convex condition} the slope metric is strongly convex only locally. See of instance the example of the paraboloid of revolution $f(x,y):=100-x^2-y^2$ in \cite{BR} where the strongly convexity condition is assured only in a circular vicinity of the hilltop.

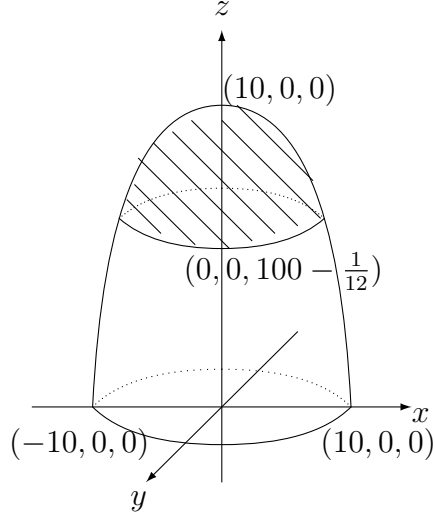
\begin{figure}[H]
\centering
\setlength{\unitlength}{1cm}
\begin{picture}(5, 7)
 
\put(0,1){\vector(1,0){5}}
\put(2.5,0){\vector(0,1){6}}
\put(3.5,2){\vector(-1,-1){2}}

\put(3.7,4){\line(-1,1){1}}
\put(3.8,3.5){\line(-1,1){1.3}}
\put(3.5,3.4){\line(-1,1){1.4}}
\put(3.2,3.3){\line(-1,1){1.35}}
\put(2.9,3.2){\line(-1,1){1.3}}
\put(2.6,3.1){\line(-1,1){1.2}}
\put(2.15,3.15){\line(-1,1){0.8}}

\put(1.7,3.3){\line(-1,1){0.45}}

\qbezier(2.5,5)(4,5)(4.2,1)
\qbezier(2.5,5)(1,5)(0.8,1)

\qbezier(2.5,0.5)(1.3,0.5)(0.8,1)
\qbezier(2.5,0.5)(3.7,0.5)(4.2,1)

\qbezier(2.5,3.1)(3.4,3.1)(3.85,3.5)
\qbezier(2.5,3.1)(1.6,3.1)(1.15,3.5)

\qbezier[25](2.5,1.5)(3.7,1.5)(4.2,1)
\qbezier[25](2.5,1.5)(1.3,1.5)(0.8,1)

\qbezier[25](2.5,3.9)(3.4,3.9)(3.85,3.5)
\qbezier[25](2.5,3.9)(1.6,3.9)(1.15,3.5)

\put(2,2.7){$(0,0,100-\frac{1}{12})$}
\put(2.5,5.1){$(10,0,0)$}
\put(-0.3,0.4){$(-10,0,0)$}
\put(3.8,0.4){$(10,0,0)$}
\put(5,0.8){$x$}
\put(1.3,-0.3){$y$}
\put(2.4,6.2){$z$} 
 
\end{picture}
\bigskip
\caption{The slope metric is strongly convex in a circular vicinity of the hilltop on a paraboloid of revolution.}
\end{figure}

However, that is not the case. There are many Riemannian surfaces that admit globally strongly convex slope metric. We describe few such examples below.

\subsection{The plane}

The simplest surface is the plane $M:z=f(x,y)=px+qy+r$, where $p,q,r$ are constants.

It is trivial to see that
\begin{equation*}
(a_{ij})=\left(\begin{matrix}
1+p^2 & pq \\ pq & 1+q^2
\end{matrix}\right),\quad
(b_i)=\left(
\begin{matrix}
p\\q
\end{matrix}\right),
\end{equation*}
thus the slope metric is actually the Minkowski metric

\begin{equation*}
F=\frac{(1+p^2)\dot{x}^2+2pq\dot{x}\dot{y}+(1+q^2)\dot{y}^2}{\sqrt{(1+p^2)\dot{x}^2+2pq\dot{x}\dot{y}+(1+q^2)\dot{y}^2}-(p\dot{x}+q\dot{y})},
\end{equation*}
with the strongly convexity condition
\begin{equation*}
p^2+q^2<\frac{1}{3}.
\end{equation*}

Hence the plane $z=\lambda\cdot x$ does admit a strongly convex slope metric for any constant $\lambda^2<\frac{1}{3}$, while $z=x$ does not (see \cite{BR}).

We recall from \cite{SS} that for a slope metric on a surface $M:z=f(x,y)$, the 1-form $\beta$ is parallel with respect to $\alpha$ if and only if $M$ is a plane. In this case the slope metric is a Berwald space.

\subsection{A list of surfaces}

Elementary computations show that all the following surfaces $z=f(x,y)$ admit strongly convex slope metric globally defined, where $f:\R^2\to\R$ are given by

\begin{enumerate}
\item $f(x,y)=\frac{1}{2\sqrt{6}}e^{-(x^2+y^2)}$,
\item $f(x,y)=\frac{1}{2\sqrt{6}}e^{-(x+y)^2}$,
\item $f(x,y)=\frac{1}{2\sqrt{6}}\arctan(x+y)$,
\item $f(x,y)=\frac{1}{2\sqrt{6}}((x+y)-\log(e^{x+y}+1))$,
\item $f(x,y)=\frac{1}{2\sqrt{6}}\log(\sqrt{(x+y)^2+1}+x+y)$.
\end{enumerate}

Indeed, all these surfaces satisfy condition \eqref{convex condition} for any $(x,y)\in\R^2$.

\begin{figure}[H]
    \centering
    \begin{subfigure}[b]{0.3\textwidth}
        \includegraphics[width=\textwidth]{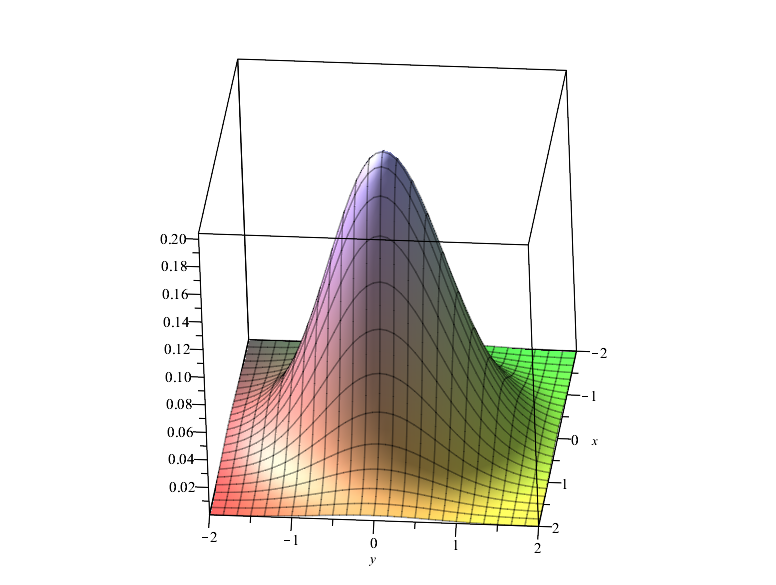}
        \caption{$\frac{1}{2\sqrt{6}}e^{-(x^2+y^2)}$}
    \end{subfigure}\quad
    \begin{subfigure}[b]{0.2\textwidth}
        \includegraphics[width=\textwidth]{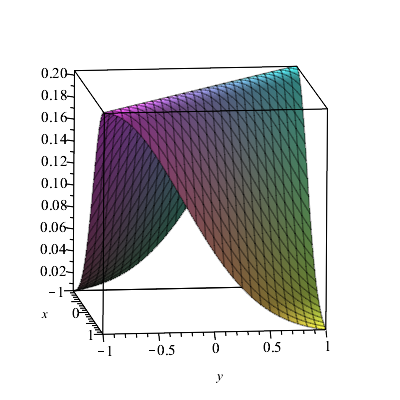}
        \caption{$\frac{1}{2\sqrt{6}}e^{-(x+2)^2}$}
        \label{fig:tiger}
    \end{subfigure}\quad
    \begin{subfigure}[b]{0.25\textwidth}
        \includegraphics[width=\textwidth]{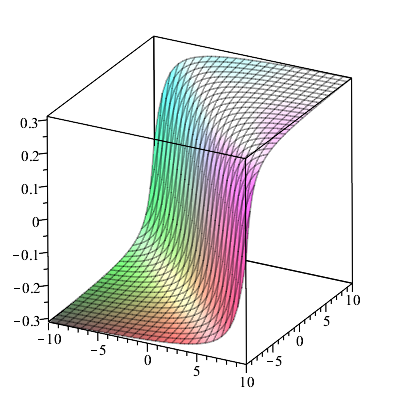}
        \caption{$\frac{1}{2\sqrt{6}}\arctan(x+y)$}
        \label{fig:mouse}
    \end{subfigure}
    \caption{Graphs of surfaces 1,2,3 described above.}\label{fig:animals}
\end{figure}

Let us remark that the surface $f(x,y):=\frac{1}{2\sqrt{6}}e^{-(x^2+y^2)}$ can be actually realized as a surface of revolution obtained by rotating the graph of the function $z=\frac{1}{2\sqrt{6}}e^{-x^2}$ around the $z$ axis.

This example suggests that surfaces of revolution are good candidates for the study of slope metrics, fact motivating the next section.

\section{The slope metric of a surface of revolution}\label{sec_4}

\subsection{Riemannian surface of revolution}\label{sec_41}

In order to fix the ideas, let us recall some basic facts from the geometry of Riemannian surfaces of revolution (see \cite{SST}).

A surface of revolution $M\to\R^3$ can be parametrization as

\begin{equation}\label{surface of revolution}
(u,v)\mapsto (x=m(u)\cos v,y=m(u)\sin v,z=u)
\end{equation}
where $u\in(0, \infty)$, $v\in\Sph^1$. Here $(u,v)$ are the geodesic polar coordinates around the pole $p\in M$, and  $m:(0,\infty)\to (0,\infty)$ is a smooth function such that $m'(0)=1$ (see \cite{SST} for details).  

\begin{remark}
	We have defined here a classical surface of revolution by rotating the image of the curve   $m:(0,\infty)\to (0,\infty)$ around the $z$ axis. However, there is no harm in taking $m:I\to (0,\infty)$, where $I\subset \R$ is an open set. See the examples below. 
\end{remark}

It is known that a curve
\begin{itemize}
\item $u=u(t)$, $v=v_0$: constant is called a {\it meridian}, and
\item $u=u_0$: constant, $v=v(t)$ is called a {\it parallel}.
\end{itemize}

Recall that a point $p\in M$ is called {\it pole} if any 2 geodesics emanating from $p$ do not meet again, in other words, the cut locus of $p$ is empty. A unit speed geodesic is called a {\it ray} if $d(\gamma(0),\gamma(s))=s$, for all $s\geq 0$. Clearly, all geodesics emanating from the pole are rays.

The induced Riemannian metric is
\begin{equation}\label{Riemannian metric}
(a_{ij})=\left(\begin{matrix}
1+(m')^2(u) & 0 \\ 0 & m^2(u)
\end{matrix}\right)
\end{equation}
and the unit speed geodesics $(u=u(t),v=v(t))$ are given by

\begin{equation}
\begin{cases}
\frac{d^2u}{dt^2}+\frac{m'm''}{1+m'^2}\left(\frac{du}{dt}\right)^2-\frac{mm'}{1+m'^2}\left(\frac{dv}{dt}\right)^2=0\\
\frac{d^2v}{dt^2}+2\frac{m'}{m}\frac{du}{dt}\frac{dv}{dt}=0.
\end{cases}
\end{equation}

The geodesic spray coefficients of this Riemannian metric read

\begin{equation*}
\begin{cases}
2\mathcal{G}^1_\alpha=\frac{m'm''}{1+m'^2}(y^1)^2-\frac{mm'}{1+m'^2}(y^2)^2\\
2\mathcal{G}^2_\alpha=2\frac{m'}{m}y^1y^2,\quad (m\neq 0).
\end{cases}
\end{equation*}

From here it follows that there exists a constant $\nu$, called the {\it Clairaut constant} such that

\begin{equation}\label{Clairaut relation}
\frac{dv}{dt}\cdot m^2(u(t))=\nu,
\end{equation}
{and hence$\frac{du}{dt}=\pm\frac{1}{m}\sqrt{\frac{m^2-\nu^2}{1+m'^2}}$, that is in the case of a Riemannian surface of revolution, the geodesic flow is integrable.

\begin{remark}
It is known that by changing the parameter $u$ on the profile curve $m(u)$ it is possible to parametrize $M$ as $(u,v)\mapsto (m(u)\cos v,m(u)\sin v,z(u))$ such that $[m'(u)]^2+[z'(u)]^2=1$. This leads to simple form of the induced Riemannian metric $(a_{ij})=\left(\begin{matrix}
1 & 0 \\ 0 & m^2(u)
\end{matrix}\right)$. We are not using this parametrization because linear form $\beta$ in \eqref{alpha,beta} is simpler when using \eqref{surface of revolution} and this leads to simplication of computations for the slope metric.
\end{remark}

\subsection{Finsler surfaces of revolution}\label{sec_42}

Let $(M,F)$ be a Finsler structure defined on a surface of revolution $M$ defined as in Section 4.1.

If $X:=\frac{\partial}{\partial v}$ is a Killing vector field for $F$, that is $\mathcal{L}_XF=X^c(F)=0$, where $X^c$ is the complete lift of $X$ to $TM$, or equivalently $\frac{\partial F}{\partial v}=0$, then $(M,F)$ is called a {\it Finsler surface of revolution}.

\begin{remark}
\begin{enumerate}
\item See \cite{INS} for a definition based on the notion of motion. Their definition is equivalent to ours.
\item See \cite{HCS} and \cite{HKS} for a complete study of rotationally Randers metrics, that is Finsler metric of type $F=\alpha+\beta$ constructed on surfaces of revolution. 
\end{enumerate}
\end{remark}

If we denote by $H(x,p)$ the Hamiltonian corresponding to the Finsler structure $(M,F)$ by means of Legendre transform (see \cite{MHSS}), then since $F$ is surface of revolution, it follows $\frac{\partial H}{\partial v}=0$. Hence, Hamilton Jacobi equations $\frac{dx^i}{ds}=\frac{\partial H}{\partial p_i}$, $\frac{dp_i}{ds}=-\frac{\partial H}{\partial x^i}$ imply that $I=p_2$ is a prime integral of the geodesic flow, that is $\frac{dp_2}{ds}=0$ along any unit speed $F$-geodesic.

On the other hand, recall that by the Legendre transform associated to $F$, we have
\begin{equation}
p_2=g_{2i}y^i=g_{12}y^1+g_{22}y^2
\end{equation}
and hence we obtain

\begin{theorem}\label{thm_F-Clairaut}
Along any unit speed $F$-geodesics $\mathcal{P}(s)=(u(s),v(s))$ we have
\begin{equation}\label{F-Clairaut}
p_2(s)=g_{12}(\mathcal{P},\dot{\mathcal{P}})\cdot\frac{du}{ds}+g_{22}(\mathcal{P},\dot{\mathcal{P}})\cdot\frac{dv}{ds}=\nu_F=\text{constant}.
\end{equation}
\end{theorem}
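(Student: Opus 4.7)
The plan is to exploit the Hamiltonian formulation of the $F$-geodesic flow, since the rotational symmetry is most transparent there. The key fact is that, in the passage from the Lagrangian $F$ to the Hamiltonian $H$ via the Legendre transform, the base coordinates are unchanged, so the hypothesis that $\partial F/\partial v=0$ transfers to $\partial H/\partial v=0$. Since $F$-geodesics lift to integral curves of the Hamiltonian vector field of $H$ on $T^*M$, Hamilton's equations will directly yield a conservation law for $p_2$.

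First, I would recall (and it is already stated in the text just before the theorem) that the Legendre transform associated to $F$ identifies the covector $p_i$ with $\tfrac{1}{2}\partial F^2/\partial y^i = g_{ij}y^j$. In particular, along a unit-speed $F$-geodesic $\mathcal{P}(s)=(u(s),v(s))$,
\begin{equation*}
p_2(s) \;=\; g_{2j}(\mathcal{P},\dot{\mathcal{P}})\,\dot{x}^j \;=\; g_{12}(\mathcal{P},\dot{\mathcal{P}})\,\frac{du}{ds}+g_{22}(\mathcal{P},\dot{\mathcal{P}})\,\frac{dv}{ds}.
\end{equation*}
So the content of the statement reduces to showing that $p_2$ is constant along the geodesic flow.

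Next, I would use the second Hamilton-Jacobi equation $dp_i/ds=-\partial H/\partial x^i$. Setting $i=2$ and denoting $x^2=v$, this gives
\begin{equation*}
\frac{dp_2}{ds} \;=\; -\frac{\partial H}{\partial v}.
\end{equation*}
The hypothesis that $(M,F)$ is a Finsler surface of revolution means $\partial F/\partial v=0$; since the Legendre transform is fibrewise and leaves the base coordinates fixed, this translates into $\partial H/\partial v=0$. Combining the two identities, $dp_2/ds=0$, so $p_2$ is a first integral of the geodesic flow, equal to some constant $\nu_F$ along $\mathcal{P}$. Substituting the Legendre formula for $p_2$ then gives \eqref{F-Clairaut}.

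The only real subtlety, and the step I would write out carefully, is the implication $\partial F/\partial v=0 \Rightarrow \partial H/\partial v=0$. This follows because in the relation $H(x,p)=p_iy^i-\tfrac{1}{2}F^2(x,y)$ with $y=y(x,p)$ defined implicitly by $p_i=g_{ij}(x,y)y^j$, the partial $\partial H/\partial v$ at fixed $p$ picks up only the explicit $v$-dependence of $F^2$ (the terms coming from the implicit dependence of $y$ on $v$ cancel against $p_i\,\partial y^i/\partial v$ by the defining relation of the Legendre transform). Once this is invoked, the proof is immediate.
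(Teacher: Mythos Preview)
Your proposal is correct and follows essentially the same argument as the paper: the paper also passes to the Hamiltonian via the Legendre transform, invokes $\partial H/\partial v=0$ from $\partial F/\partial v=0$, applies Hamilton's equation $dp_2/ds=-\partial H/\partial v$ to conclude $p_2$ is a first integral, and then identifies $p_2=g_{2i}y^i$ via the Legendre map. If anything, you are more careful than the paper in justifying the implication $\partial F/\partial v=0\Rightarrow\partial H/\partial v=0$, which the paper simply asserts.
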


That is, \eqref{F-Clairaut} is the corresponding relation to \eqref{Clairaut relation} in the Finslerian setting.

The constant $\nu_F$ plays the role of the Clairaut constant for Finslerian geodesics.

\begin{remark}
See \cite{INS} for an alternate proof of this formula.
\end{remark}

It follows that, for any unit speed $F$-geodesic, we have
\begin{equation}\label{algebraic}
\begin{cases}
g_{12}(\mathcal{P},\dot{\mathcal{P}})\cdot\frac{du}{ds}+g_{22}(\mathcal{P},\dot{\mathcal{P}})\cdot\frac{dv}{ds}=\nu_F\\
F(\mathcal{P},\dot{\mathcal{P}})=1
\end{cases}
\end{equation}
and theoretically, by solving this algebraic system, we can obtain $\frac{du}{dt}$ and $\frac{dv}{dt}$ that by integration would give the trajectories of the $F$-geodesics. However, observe that finding an explicit solution of the system is not a trivial task.

\begin{remark}
\begin{enumerate}
\item As far as we know, the relation \eqref{F-Clairaut} appeared for the first time in the case of the rotational Randers surface of revolution studied in \cite{HCS}, where the Clairaut constant for the Randers geodesics is $\frac{\nu}{1+\mu\nu}$. Here $\nu$ is the usual Clairaut constant of the corresponding Riemannian geodesic through the Zermelo navigation process.
\item  We denote by $\varphi_t$ the flow of $\frac{\partial}{\partial v}$, which is a Finslerian isometry preserving the orientation of $M$. The Finslerian distance $d_F$ is invariant under $\varphi_t$.
\end{enumerate}
\end{remark}

\subsection{The slope metric on a surface of revolution}\label{sec_43}

Let us consider again the surface of revolution $M$ with the parametrization \eqref{surface of revolution} and induced Riemannian metric \eqref{Riemannian metric}.

Following again Matsumoto's slope principle, observe that the orthonormal frame in $T_pM$ at a given $p\in M$ is

\begin{equation*}
\begin{cases}
e_1=-\frac{1}{\sqrt{(m')^2+1}}\cdot\frac{\partial}{\partial u}\\
e_2=\frac{1}{m}\cdot\frac{\partial}{\partial v}
\end{cases}
\end{equation*}
and here, the relation between the coordinates $(X,Y)$ of $T_pM$ with respect to $\{e_1,e_2\}$ and the canonical coordinates $(\dot{u},\dot{v})$ is
\begin{equation*}
X=-\sqrt{1+(m')^2}\cdot\dot{u},\quad Y=m\cdot\dot{v}.
\end{equation*}

The lima\c{c}on implicit equation \eqref{implicit limacon} reads now
\begin{equation*}
\left[1+(m')^2\right]\dot{u}^2+m^2\cdot\dot{v}^2=c\sqrt{\left[1+(m')^2\right]\dot{u}^2+m^2\dot{v}^2}-a\sqrt{1+(m')^2}\cdot\dot{u},
\end{equation*}
and taking into account that $a=\sin\varepsilon=\frac{1}{\sqrt{1+(m')^2}}$ we obtain the slope metric in the form \eqref{slope metric} with
\begin{equation}\label{slope metric on surface of revolution}
\begin{cases}
\alpha=\sqrt{\left[1+(m')^2\right]\dot{u}^2+m^2\dot{v}^2}\\
\beta=\dot{u}.
\end{cases}
\end{equation}

Taking into account the strongly convexity condition $b<\frac{1}{2}$ it follows

\begin{theorem}\label{thm_F_surface of revolution}
A surface of revolution $M\to\R^3$, $(u,v)\mapsto(m(u)\cos v,m(u)\sin v,u)$ admits a strongly convex slope metric $F=\frac{\alpha^2}{\alpha-\beta}$, with $\alpha,\beta$ given in \eqref{slope metric on surface of revolution} if and only if
\begin{equation}\label{convex condition on surface of revolution}
(m')^2>3.
\end{equation}

Moreover, $(M,F)$ is a Finsler surface of revolution.
\end{theorem}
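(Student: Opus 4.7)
The plan is to reduce the strong convexity assertion to the clean criterion $b<\tfrac12$ recorded in Remark (3) after Proposition \ref{prop_convex f}, then compute $b^{2}=a^{ij}b_{i}b_{j}$ for the specific $\alpha,\beta$ given in \eqref{slope metric on surface of revolution}, and finally verify the Killing condition $\partial F/\partial v=0$ to conclude that $(M,F)$ is a Finsler surface of revolution.

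First I would read off from \eqref{slope metric on surface of revolution} that the components of the 1-form $\beta$ are $b_{1}=1,\ b_{2}=0$, since $\beta=\dot u$. The induced Riemannian metric is \eqref{Riemannian metric}, which is diagonal, so its inverse is
\begin{equation*}
(a^{ij})=\begin{pmatrix} \dfrac{1}{1+(m')^{2}} & 0 \\[4pt] 0 & \dfrac{1}{m^{2}} \end{pmatrix}.
\end{equation*}
A direct evaluation then yields $b^{2}=a^{ij}b_{i}b_{j}=a^{11}=\dfrac{1}{1+(m')^{2}}$, which is manifestly independent of $v$.

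Second, I would invoke the equivalence stated in Remark (3): the indicatrix of $F=\alpha^{2}/(\alpha-\beta)$ is strongly convex if and only if $b<\tfrac12$, i.e.\ $b^{2}<\tfrac14$. Substituting the expression for $b^{2}$ found above, this becomes $\dfrac{1}{1+(m')^{2}}<\dfrac14$, which rearranges to the claimed condition \eqref{convex condition on surface of revolution}, namely $(m')^{2}>3$. Both implications are contained in this single algebraic manipulation, so this direction of the argument requires no further work.

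Third, for the Finslerian surface-of-revolution statement, I would observe that in \eqref{slope metric on surface of revolution} the only $u,v$-dependence enters through $m(u)$ and $m'(u)$; there is no $v$. Hence $\partial\alpha/\partial v=\partial\beta/\partial v=0$, and since $F=\alpha^{2}/(\alpha-\beta)$ is a function of $\alpha$ and $\beta$ alone, it follows that $\partial F/\partial v=0$. By the definition recalled at the start of Section \ref{sec_42} this exactly says that $X=\partial/\partial v$ is Killing for $F$, so $(M,F)$ is a Finsler surface of revolution.

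I do not anticipate a real obstacle here: the whole argument is a short computation. The only point requiring care is making sure the reduction to $b<\tfrac12$ applies as stated, since the parametrization \eqref{surface of revolution} is not of the graph form $z=f(x,y)$ used in Proposition \ref{prop_convex f}. This is however harmless, because the condition $b<\tfrac12$ was derived directly from the general $(\alpha,\beta)$-structure of $F$ via Shen's strong convexity criterion applied to $\phi(\mathfrak s)=1/(1-\mathfrak s)$, and is intrinsic to the pair $(\alpha,\beta)$, not to how $M$ is embedded in $\R^{3}$.
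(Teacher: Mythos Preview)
Your proposal is correct and follows essentially the same approach as the paper: the paper simply states ``Taking into account the strongly convexity condition $b<\tfrac12$ it follows'' and leaves the computation implicit, while you spell out $b^{2}=a^{11}=1/(1+(m')^{2})$ and the rearrangement to $(m')^{2}>3$, together with the verification that $\partial F/\partial v=0$. Your remark that the criterion $b<\tfrac12$ is intrinsic to the $(\alpha,\beta)$-structure (and not tied to the graph parametrization) is a sensible clarification that the paper omits.
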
 

Let us recall from Poincar\'e -Hopf index theorem for the rotational vector filed $X=\frac{\partial}{\partial v}\Big\vert_{p}$, $p\in M$, that the strongly convexity condition \eqref{convex condition on surface of revolution} implies that number of singular points of $X$ on $M$ can be only 1 or 0. Indeed, otherwise $X$ would be vanishing, or $M$ would be homeomorphic to the sphere, and this is not possible.
It is clear from \eqref{convex condition on surface of revolution} that $M$ cannot be boundaryless compact manifold. The case of a cylinder of revolution is not possible either due   \eqref{convex condition on surface of revolution}, hence we obtain

\begin{theorem}\label{thm_new}
The surfaces of revolution $M$ admitting globally defined strongly convex slope metrics one  homeomorphic to $\R^2$.
\end{theorem}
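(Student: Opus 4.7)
The plan is to combine the Poincar\'e--Hopf index theorem applied to the rotational Killing field $X=\partial_v$ with the rigidity forced by the convexity condition $(m')^2>3$ established in Theorem \ref{thm_F_surface of revolution}. Since $m'$ never vanishes on its domain $I$, the profile function $m$ is strictly monotone, and I may assume without loss of generality that $m'>\sqrt{3}$ throughout $I$.

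First I would rule out the compact boundaryless case. The zeros of $X$ on $M$ are precisely the poles, i.e.\ points with $m=0$, and each such zero carries index $+1$, so Poincar\'e--Hopf yields (number of poles)$=\chi(M)$. Among closed orientable surfaces, only $\chi=2$ (sphere, two poles) and $\chi=0$ (torus, no poles with the $u$-parameter on $\Sph^1$) are realizable as surfaces of revolution. In the spherical case $m$ would vanish at two distinct values of $u$, so by Rolle's theorem $m'$ would attain the value $0$ somewhere in between, contradicting $|m'|>\sqrt{3}$. In the toroidal case $m$ would be a smooth positive periodic function on $\Sph^1$, hence would attain an extremum where $m'=0$, again contradicting the convexity bound.

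Next I would rule out the cylinder topology $\Sph^1\times\R$, in which $X$ has no zeros and $m>0$ throughout $I$. Integrating $m'>\sqrt{3}$ starting from any $u_0\in I$ gives $m(u)\le m(u_0)-\sqrt{3}(u_0-u)$ for $u<u_0$, so the graph of $m$ must hit $0$ at the finite value $u_\ast=u_0-m(u_0)/\sqrt{3}$. Consequently $I$ cannot extend below $u_\ast$, and at its infimum one has $m\to 0^+$; adjoining the corresponding axis point to $M$ as a pole collapses the topology to that of a disk, contradicting the assumed cylindrical topology. The remaining case is exactly one pole, which precisely gives $M\cong\R^2$.

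The main obstacle I expect is the second step: making precise the passage from ``$m\to 0$ at the lower end of $I$'' to ``$M$ includes that axis point as a pole and therefore has disk topology''. This requires fixing the convention that $M$ is taken to be the maximal surface of revolution in $\R^3$ compatible with the profile $m$, a convention that is implicit in the paper's setup (cf.\ the Remark following \eqref{surface of revolution}) but which should be stated explicitly so that the cylinder case genuinely reduces to the one-pole case rather than surviving as a separate ``incomplete'' option.
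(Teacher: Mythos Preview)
Your proposal is correct and follows essentially the same route as the paper: apply Poincar\'e--Hopf to the rotational Killing field $X=\partial_v$, then use the bound $(m')^2>3$ from Theorem~\ref{thm_F_surface of revolution} to exclude the sphere, torus and cylinder topologies. The paper's argument is compressed into a single paragraph that simply asserts these exclusions ``due to \eqref{convex condition on surface of revolution}''; your version supplies the explicit calculus (Rolle, extremum, integration of $m'>\sqrt{3}$) behind those assertions and also flags the implicit maximality convention needed in the cylinder step, a subtlety the paper does not address.
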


One can now easily construct examples of surfaces of revolution satisfy condition \eqref{convex condition on surface of revolution}. Here are such surfaces

\begin{enumerate}
\item $m(u)=\sqrt{6u^2-1}$, for $u\in(\frac{1}{\sqrt{6}},\infty)$;
\item $m(u)=\frac{1}{2}\sqrt{-2\ln(24u^2)}$, for $u\in(0,\frac{1}{2\sqrt{6}})$.
\end{enumerate}

\begin{figure}[H]
    \centering
   
    \begin{subfigure}[b]{0.3\textwidth}
        \includegraphics[width=\textwidth]{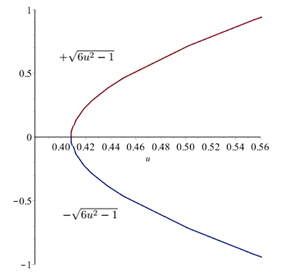}
        \caption{$\pm\sqrt{6u^2-1}$}
    \end{subfigure}\quad
          \begin{subfigure}[b]{0.3\textwidth}
        \includegraphics[width=\textwidth]{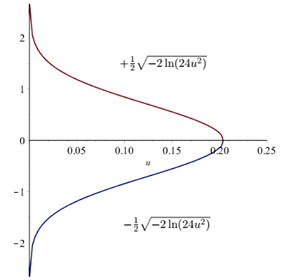}
        \caption{$\pm\frac{1}{2}\sqrt{-2\ln(24u^2)}$}
        \end{subfigure}\quad\quad
    \caption{Graphs of profile curves corresponding to the functions $m(u)$ in the examples above. Pay attention to the fact that this are actually the graph of the inverse function $m^{-1}$.}
\end{figure}
Since the slope metric $F$ is a Finslerian surface of revolution, the theory explained in Section 4.2 applies.

\subsection{The geodesics of a surface of revolution with the slope metric}\label{sec_44}

In order to study to geodesics of the slope metric $(M,F=\frac{\alpha^2}{\alpha-\beta})$ we need a formula for the geodesic spray of $F$.

We recall the general formula for an arbitrary $(\alpha,\beta)$-metric

\begin{equation*}
G^i=\mathcal{G}^i_\alpha+\alpha Qs^i_0+\Theta\{-2Q\alpha s_0+r_{00}\}\frac{y^i}{\alpha}+\Psi\{-2Q\alpha s_0+r_{00}\}b^i
\end{equation*}
where $G^i$ and $\mathcal{G}^i_\alpha$ denote the spray coefficients for $F$ and $\alpha$, respectively.

Here we use the customary notations:

\begin{equation*}
\begin{split}
r_{ij}:=\frac{1}{2}(b_{i|j}+b_{j|i}),\quad s_{ij}:=\frac{1}{2}(b_{i|j}-b_{j:i})\\
s^i_j:=a^{ik}s_{kj},\quad s_j=b_is^i_j,\quad b^i=a^{ij}b_j,
\end{split}
\end{equation*}
and
\begin{equation*}
\begin{split}
Q&:=\frac{\phi'}{\phi-\mathfrak s\phi'}\\
\Theta&:=\frac{\phi-\mathfrak s\phi'}{2[\phi-\mathfrak s\phi'+(b^2-\mathfrak s^2)\phi]}\cdot\frac{\phi'}{\phi}-\mathfrak s\Psi=\left[\frac{\phi-\mathfrak s\phi'}{\phi''}\cdot\frac{\phi'}{\phi}-\mathfrak s\right]\Psi\\
\Psi&:=\frac{\phi''}{2[\phi-\mathfrak s\phi'+(b^2-\mathfrak s^2)\phi'']}
\end{split}
\end{equation*}
(see \cite{BCS2}).

In the case of $\alpha,\beta$ given in \eqref{slope metric on surface of revolution} we obtain

\begin{equation*}
\left(\begin{matrix}
b_{1|1} \\ b_{2|2}
\end{matrix}\right)=
\left(\begin{matrix}
\frac{-m'm''}{1+m'^2}\\
\frac{mm'}{1+m'^2}
\end{matrix}\right),\quad
\begin{matrix}
r_{00}=-2\cdot\mathcal{G}^1_\alpha &\\
s^i_j=s_j=0, &\quad b_{1|2}=0,
\end{matrix}
\end{equation*}
and hence
\begin{equation*}
G^i=\mathcal{G}^i_\alpha+r_{00}\left[\Theta\frac{y^i}{\alpha}+\Psi\cdot b^i\right].
\end{equation*}

By taking into account now $\phi(\mathfrak s)=\frac{1}{1-\mathfrak s}$ after some computations we get

\begin{equation*}
\begin{split}
\Psi&=\frac{1}{2b^2-3\mathfrak s+1}=\frac{\alpha}{(2b^2+1)\alpha-3\beta}\\
\Theta&=\left(\frac{1}{2}-2\mathfrak s\right)\Psi=\frac{1-4\mathfrak s}{2(2b^2-3\mathfrak s+1)}=\frac{\alpha-4\beta}{2\alpha}\cdot\Psi=\frac{\alpha-4\beta}{2[(2b^2+1)\alpha-3\beta]}
\end{split}
\end{equation*}
and hence
\begin{equation*}
G^i=\mathcal{G}^i_\alpha+r_{00}\left[\frac{\alpha-4\beta}{2\alpha}\cdot\frac{y^i}{\alpha}+b^i\right]\cdot\Psi.
\end{equation*}

In particular

\begin{equation*}
\begin{cases}
G^1=\mathcal{G}^1_\alpha\cdot\frac{(\alpha-2\beta)^2}{\alpha[(2b^2+1)\alpha-3\beta]}\\
G^2=\mathcal{G}^2_\alpha-\mathcal{G}^1_\alpha\cdot\frac{\alpha-4\beta}{\alpha[(2b^2+1)\alpha-3\beta]}\cdot y^2,
\end{cases}
\end{equation*}
and therefore, the unit speed $F$-geodesic equations are
\begin{equation}\label{F geodesic equations}
\begin{cases}
\frac{d^2u}{ds^2}+2\mathcal{G}^1_\alpha\cdot\frac{(\alpha-2\beta)^2}{\alpha[(2b^2+1)\alpha-3\beta]}\Bigg\vert_{(u(s),v(s))}=0\\
\frac{d^2v}{ds^2}+2\mathcal{G}^2_\alpha-2\mathcal{G}^1_\alpha\cdot\frac{\alpha-4\beta}{\alpha[(2b^2+1)\alpha-3\beta]}\cdot \frac{dv}{ds}\Bigg\vert_{(u(s),v(s))}=0.
\end{cases}
\end{equation}

The geodesic equations in this form are not of much use. 

However, some conclusions can be drawn.

\begin{proposition}\label{prop_meri}
The meridians are $F$-unit speed geodesics.
\end{proposition}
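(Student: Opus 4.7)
The cleanest route is via symmetry: exhibit an $F$-isometry whose fixed point set is the given meridian, and then invoke the standard principle that any $F$-geodesic whose initial velocity is fixed by an isometry lies entirely in the fixed set.

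For each $v_0\in\Sph^1$, I would consider the reflection $\sigma_{v_0}\colon (u,v)\mapsto (u,2v_0-v)$, with differential $d\sigma_{v_0}(\dot u,\dot v)=(\dot u,-\dot v)$. From \eqref{slope metric on surface of revolution}, the Riemannian norm $\alpha$ depends on $\dot v$ only through $\dot v^2$, and the linear form $\beta=\dot u$ is left unchanged, so $F=\alpha^2/(\alpha-\beta)$ is preserved by $\sigma_{v_0}$. The fixed point set of $\sigma_{v_0}$ in $M$ is exactly the meridian $\{v=v_0\}$, and $d\sigma_{v_0}$ fixes every tangent vector of the form $\lambda\,\partial_u$.

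Since any $F$-isometry sends $F$-geodesics to $F$-geodesics, if $\gamma$ is the unique $F$-geodesic with $\gamma(0)=p$ on the meridian and $\dot\gamma(0)=\lambda\,\partial_u|_p$, then $\sigma_{v_0}\circ\gamma$ is another $F$-geodesic with identical initial conditions. Uniqueness forces $\sigma_{v_0}\circ\gamma=\gamma$, so $\gamma$ remains inside the meridian throughout its maximal interval. Reparametrizing by $F$-arclength produces a unit-speed $F$-geodesic whose trace is the meridian.

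The one point requiring attention is the admissibility of the initial direction $\pm\partial_u$ in the strong convexity cone $\beta<\alpha/2$. Along the meridian $\alpha=|\dot u|\sqrt{1+(m')^2}$ and $\beta=\dot u$, so the upward direction $\dot u>0$ is admissible precisely when $\sqrt{1+(m')^2}>2$, which is exactly the convexity hypothesis \eqref{convex condition on surface of revolution}, while the downward direction is automatic since $\beta<0<\alpha/2$. I do not anticipate any genuine obstacle here; as a backup one may instead substitute $v\equiv v_0$ directly into \eqref{F geodesic equations} and observe that $2\mathcal G^2_\alpha=2(m'/m)\dot u\dot v$ vanishes on meridians, so the second equation is trivially satisfied, while the first collapses to a scalar ODE for $u(s)$ that is pinned down by the unit-speed constraint $F(\dot u,0)=1$.
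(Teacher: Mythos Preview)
Your argument is correct, but it differs from the paper's. The paper proceeds exactly as in your ``backup'': it parametrizes the meridian as $\mathcal P(s)=(u(s),v_0)$, notes that $\dot v\equiv 0$ kills $2\mathcal G^2_\alpha=\frac{2m'}{m}\dot u\dot v$ and the extra $\dot v$-term in the second equation of \eqref{F geodesic equations}, and then observes that the first equation together with the unit-speed constraint $F(\dot u,0)=1$ determines $u(s)$. Your primary route via the reflection $\sigma_{v_0}$ is more conceptual and has the advantage of never touching the explicit spray coefficients $G^i$; it would in fact work for any $(\alpha,\beta)$-metric on a surface of revolution with $\beta=b_1(u)\dot u$, whereas the paper's verification relies on the specific form of \eqref{F geodesic equations}. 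On the other hand, the direct substitution is shorter in this context since the geodesic equations have already been computed, and it sidesteps the need to invoke uniqueness of the geodesic flow.

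One small remark on your admissibility paragraph: once the global hypothesis \eqref{convex condition on surface of revolution} is assumed, the slope metric is strongly convex on the whole slit tangent bundle, so there is no separate ``cone'' to check; your computation simply reconfirms that \eqref{convex condition on surface of revolution} is exactly what makes the upward direction $\partial_u$ lie in the region $\beta<\tfrac12\alpha$. This is fine, but you need not present it as a potential obstacle.
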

\begin{proof}
If we consider an ($F$-unit speed) meridian $\mathcal{P}(s)=(u(s),v_0)$, then $\dot{\mathcal{P}}(s)=\left(\frac{du}{ds},0\right)$ and by using the $F$-unit speed condition the geodesic equation \eqref{F geodesic equations} are identically satisfied. 

$\qedd$
\end{proof}

\begin{proposition}\label{prop_para}
A parallel $\mathcal{P}(s)=(u_0,v(s))$ is $F$-geodesic if and only if $m'(u_0)=0$, that is a strongly convex slope metric do not admit parallels geodesics.
\end{proposition}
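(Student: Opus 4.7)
The strategy is to substitute the parallel $\mathcal{P}(s)=(u_0,v(s))$ directly into the geodesic equations \eqref{F geodesic equations} and see what constraint falls out. Along the parallel, $\dot u\equiv 0$ and $\ddot u\equiv 0$, so from \eqref{slope metric on surface of revolution} we get $\beta=\dot u=0$ and $\alpha=m(u_0)\,|\dot v|$. The unit-speed normalization $F=\alpha^2/(\alpha-\beta)=\alpha=1$ then forces $\dot v=1/m(u_0)$, in particular $\dot v\neq 0$; this will be crucial because it lets us divide by $\dot v$ later.

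Next I would evaluate the Riemannian spray coefficients recalled in Section \ref{sec_41} on the parallel. With $y^1=\dot u=0$ and $y^2=\dot v\neq 0$, the formulas give
\begin{equation*}
2\mathcal{G}^1_\alpha\big|_{\mathcal{P}}=-\frac{m(u_0)\,m'(u_0)}{1+m'(u_0)^2}\,\dot v^2,\qquad 2\mathcal{G}^2_\alpha\big|_{\mathcal{P}}=0.
\end{equation*}
The first equation of \eqref{F geodesic equations} reduces (using $\beta=0$) to
\begin{equation*}
2\mathcal{G}^1_\alpha\cdot\frac{\alpha^2}{(2b^2+1)\alpha^2}=\frac{2\mathcal{G}^1_\alpha}{2b^2+1}=0,
\end{equation*}
which, together with $m(u_0)>0$ and $\dot v\neq 0$, is equivalent to $m'(u_0)=0$. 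Conversely, if $m'(u_0)=0$ then $\mathcal{G}^1_\alpha|_{\mathcal{P}}=\mathcal{G}^2_\alpha|_{\mathcal{P}}=0$, and the second equation of \eqref{F geodesic equations} collapses to $\ddot v=0$, which is consistent with $\dot v=1/m(u_0)$ being constant. So the parallel at height $u_0$ is an $F$-geodesic if and only if $m'(u_0)=0$.

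For the final assertion, I invoke Theorem \ref{thm_F_surface of revolution}: strong convexity of the slope metric forces $(m')^2>3$ pointwise on the domain of the profile curve, so $m'$ never vanishes. Therefore no parallel can satisfy the condition $m'(u_0)=0$, and the slope metric admits no parallel geodesics. The only place where a careful check is needed is the equivalence in the first geodesic equation: one must be sure that the denominator $(2b^2+1)\alpha-3\beta$ does not vanish on the parallel, but this is immediate since $\beta=0$ and $\alpha=m(u_0)>0$, so dividing is legitimate. The rest is a clean substitution.
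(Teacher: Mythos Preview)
Your proof is correct and follows essentially the same approach as the paper's own proof: substitute the parallel into the $F$-geodesic equations \eqref{F geodesic equations}, use $\beta=0$ along the parallel to reduce to the Riemannian condition $\mathcal{G}^1_\alpha=0$, and then invoke the strong convexity condition $(m')^2>3$ from Theorem \ref{thm_F_surface of revolution}. The paper's argument is terser---it simply observes that $\alpha^2=1$, $\beta=0$ along the parallel and appeals to ``the same arguments as in the Riemannian case''---whereas you carry out the substitution explicitly and verify the denominator is nonzero; but the content is the same.
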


\begin{proof}
$(\Rightarrow)$ If the parallel $\mathcal{P}(s)=(u_0,v(s))$ is a unit speed $F$-geodesic, then along $\mathcal{P}(s)$, $\alpha^2\Big\vert_{(\mathcal{P},\dot{\mathcal{P}})}=1$ and $\beta\Big\vert_{(\mathcal{P},\dot{\mathcal{P}})}=0$, hence the conclusion follows from the same arguments as in the Riemannian case.

$(\Leftarrow)$ If we assume $m'(u_0)=0$ then the conclusion follows in a similar way with the Riemannian case.

$\qedd$
\end{proof}

\begin{proposition}\label{prop_proj}
The slope metric $F=\frac{\alpha^2}{\alpha-\beta}$ can not be projectively equivalent to the Riemannian metric of $M$, nor projectively flat.
\end{proposition}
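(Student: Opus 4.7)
I would attack both claims through the explicit spray formulas $G^1_F, G^2_F$ established in Section \ref{sec_44}. The common core is that, under the strong convexity hypothesis $(m')^2>3$, the Riemannian spray coefficient $\mathcal{G}^1_\alpha$ (equivalently $r_{00}=-2\mathcal{G}^1_\alpha$) cannot vanish identically as a polynomial in $(y^1,y^2)$: its vanishing would require $m'm''\equiv 0$ and $mm'\equiv 0$, and since $m>0$ this forces $m'\equiv 0$, contradicting $(m')^2>3$.

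For the first claim, $F$ is projectively equivalent to $\alpha$ exactly when
\[
G^i_F-\mathcal{G}^i_\alpha \;=\; r_{00}\bigl[\Theta\,y^i/\alpha+\Psi\,b^i\bigr] \;=\; P(x,y)\,y^i
\]
for some $1$-homogeneous scalar $P$. In our surface-of-revolution chart $\beta=\dot u$, so the contravariant vector $b^i=a^{ij}b_j$ has components $b^1=1/(1+(m')^2)>0$ and $b^2=0$. The $i=2$ projective equation therefore fixes $P=-2\mathcal{G}^1_\alpha\,\Theta/\alpha$; feeding this back into the $i=1$ equation collapses the whole condition to $\mathcal{G}^1_\alpha\,\Psi\,b^1\equiv 0$ in the $y$-variables. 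Since $\Psi>0$ and $b^1>0$, this forces $\mathcal{G}^1_\alpha\equiv 0$, the forbidden identity.

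For the second claim, I would use that projective flatness implies the Douglas tensor vanishes, so it suffices to show $F$ is not a Douglas metric. Recall that $F$ is Douglas iff $G^i_F y^j - G^j_F y^i$ is a homogeneous polynomial of degree three in $y$; in dimension two this is the single requirement that $G^1_F y^2 - G^2_F y^1$ be a cubic polynomial in $(y^1,y^2)$. A direct computation gives
\[
G^1_F\, y^2-G^2_F\, y^1 \;=\; \bigl(\mathcal{G}^1_\alpha\, y^2-\mathcal{G}^2_\alpha\, y^1\bigr) \;-\; 2\mathcal{G}^1_\alpha\,\Psi\,b^1\,y^2,
\]
with the first summand already cubic. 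Rationalizing $\Psi=\alpha/[(2b^2+1)\alpha-3\beta]$ against its conjugate writes $\Psi=(A+B\alpha)/D$ with $A,B,D\in\mathbb{R}[y^1,y^2]$, so the extra term decomposes as $R_1+R_2\alpha$ with $R_1,R_2$ rational in $y$. Polynomiality forces $R_2\equiv 0$, and a short computation identifies $R_2$ as a non-zero scalar multiple of $\mathcal{G}^1_\alpha\,y^1y^2$; hence once again $\mathcal{G}^1_\alpha\equiv 0$, the same contradiction.

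The main technical obstacle is the ``$R_2=0$'' step, which rests on the algebraic fact that $\alpha=\sqrt{(1+(m')^2)(y^1)^2+m^2(y^2)^2}$ is not a rational function of $(y^1,y^2)$: the radicand is a positive-definite binary quadratic form with non-constant coefficients, hence not a square in $\mathbb{R}[y^1,y^2]$. This makes $\mathbb{R}(y^1,y^2)[\alpha]$ a genuine quadratic extension of $\mathbb{R}(y^1,y^2)$, legitimating the separation of each expression into its ``polynomial'' and ``$\alpha$-linear'' parts. Once this is in place, both claims reduce to the single polynomial identity $\mathcal{G}^1_\alpha\equiv 0$, which the strong convexity assumption $(m')^2>3$ forbids.
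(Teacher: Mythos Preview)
Your argument is correct, but it takes a genuinely different route from the paper. The paper's proof simply invokes the known structural characterization: a Matsumoto metric $F=\alpha^2/(\alpha-\beta)$ is projectively equivalent to $\alpha$ if and only if $\beta$ is parallel with respect to $\alpha$ (i.e.\ $b_{i|j}=0$), and is projectively flat only if in addition $\alpha$ itself is projectively flat. It then checks directly that $b_{1|1}=-\gamma^1_{11}\neq 0$ and $b_{2|2}=-\gamma^1_{22}\neq 0$, so $\beta$ is not parallel, and both conclusions follow in one stroke.

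You instead work from the explicit spray decomposition $G^i=\mathcal G^i_\alpha+r_{00}\bigl[\Theta\,y^i/\alpha+\Psi\,b^i\bigr]$ together with $b^2=0$. For the first claim this effectively rederives, in this concrete chart, the ``$\beta$ parallel'' criterion: you arrive at $r_{00}\,\Psi\,b^1\equiv 0$, hence $r_{00}\equiv 0$, which is exactly $b_{i|j}=0$ here since $s_{ij}=0$. For the second claim you bypass the projective-flatness characterization entirely and instead use the weaker obstruction that projectively flat implies Douglas; the irrationality of $\alpha$ over $\mathbb R(y^1,y^2)$ then forces the $\alpha$-coefficient $R_2$ (proportional to $\mathcal G^1_\alpha\,y^1y^2/D$) to vanish, again yielding $\mathcal G^1_\alpha\equiv 0$. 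Your approach is more self-contained and avoids appealing to the literature on $(\alpha,\beta)$-metrics, at the cost of a longer computation; the paper's approach is shorter but presupposes those classification results. Both reduce to the same impossibility $m'\equiv 0$ versus $(m')^2>3$. One minor wording point: your $R_2$ is a nonzero rational multiple (with denominator $D$), not a scalar multiple, of $\mathcal G^1_\alpha\,y^1y^2$, but this does not affect the conclusion.
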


\begin{proof}
Recall that a Matsumoto metric $F=\frac{\alpha^2}{\alpha-\beta}$ is projectively equivalent to the Riemannian metric $\alpha$ if and only if $\beta$ is parallel with respect to $a_{ij}$, that is $b_{i|j}=0$, where $|$ is the covariant derivative with respect to $a_{ij}$.

However, observe that in the case of the slope metric we have

\begin{equation*}
b_{1|1}=-\gamma^1_{11}\neq 0,\quad b_{1|2}=0,\quad b_{2|2}=-\gamma^1_{22}\neq 0.
\end{equation*}

In order to be projectively flat $\beta$ must be parallel and $\alpha$ projectively flat. Clearly, none of these conditions is true  in the case of the slope metric.

$\qedd$
\end{proof}

Let us consider the prime integral $p_2$ of the geodesic flow.

A straightforward computation shows

\begin{theorem}\label{thm_Clairaut_p2}
Along the unit speed $F$-geodesic $\mathcal{P}:(0,a)\to M$, $\mathcal{P}(s)=(u(s),v(s))$, $\frac{du}{ds}\neq 0$ for $s\in(0,a)$ we have:
\begin{equation}\label{p2 slope metric}
p_2(s)=(g_{12}y^1+g_{22}y^2)\big\vert_{(\mathcal{P},\dot{\mathcal{P}})}=\rho \big\vert_{(\mathcal{P},\dot{\mathcal{P}})}\cdot m^2(u(s))\cdot\frac{dv}{ds}=\nu_F.
\end{equation}
\end{theorem}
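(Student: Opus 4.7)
The plan is to compute $p_2 = g_{12}y^1 + g_{22}y^2$ directly for the slope metric on a surface of revolution, identify the answer with $\rho \cdot m^2(u)\dot v$, and then invoke Theorem \ref{thm_F-Clairaut} to conclude constancy along unit speed $F$-geodesics. The computation is most cleanly done by observing that for the Finsler fundamental tensor one has, by Euler's theorem applied to the $2$-homogeneous function $F^2$,
\begin{equation*}
p_2 \;=\; g_{2i}\,y^i \;=\; \tfrac{1}{2}\tfrac{\partial F^2}{\partial y^2} \;=\; F\,F_{y^2}.
\end{equation*}
This identity replaces the need to manipulate the full expression $g_{ij} = \rho a_{ij}+\rho_0 b_i b_j + \rho_1(b_i\alpha_j+b_j\alpha_i)-\rho\rho_1\alpha_i\alpha_j$ term by term, which would also work but is more tedious.

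First I would specialize to the present setting. Because $\beta=\dot u = y^1$, we have $b_1=1$, $b_2=0$ and $\beta_{y^2}=0$, while $\alpha_2 = \partial\alpha/\partial y^2 = a_{22}y^2/\alpha = m^2(u)\dot v/\alpha$. Then a short differentiation of $F=\alpha^2/(\alpha-\beta)$ yields
\begin{equation*}
F_{y^2} \;=\; \frac{2\alpha\,\alpha_2(\alpha-\beta)-\alpha^2\alpha_2}{(\alpha-\beta)^2} \;=\; \frac{\alpha\,\alpha_2(\alpha-2\beta)}{(\alpha-\beta)^2}.
\end{equation*}
Multiplying by $F$ gives $p_2 = \alpha^3\alpha_2(\alpha-2\beta)/(\alpha-\beta)^3$, and substituting $\alpha_2 = m^2\dot v/\alpha$ produces
\begin{equation*}
p_2 \;=\; \frac{\alpha^2(\alpha-2\beta)}{(\alpha-\beta)^3}\cdot m^2(u)\cdot\frac{dv}{ds}.
\end{equation*}
Comparing with the formula $\rho=\alpha^2(\alpha-2\beta)/(\alpha-\beta)^3$ recorded earlier in the excerpt, I obtain exactly $p_2=\rho\cdot m^2(u)\cdot \dot v$.

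To finish, I would apply Theorem \ref{thm_F-Clairaut}: since the slope metric is, by Theorem \ref{thm_F_surface of revolution}, a Finsler metric of revolution (i.e. $\partial F/\partial v=0$), $p_2$ is a prime integral of the geodesic flow. Hence along any unit speed $F$-geodesic $\mathcal P(s)=(u(s),v(s))$ the quantity $p_2(s)$ equals a constant $\nu_F$, yielding the desired identity.

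The computation has no real obstacle beyond careful bookkeeping; the only subtlety worth flagging is the role of the hypothesis $du/ds\neq 0$. Along the proof itself this hypothesis is not used, but it matters in applications of \eqref{p2 slope metric}: when one tries to solve for $dv/ds$ via $\rho\cdot m^2\cdot\dot v=\nu_F$ together with the unit speed condition $F(\mathcal P,\dot{\mathcal P})=1$, one must know that $\rho$ is evaluated on a genuinely non-radial tangent direction, otherwise $\beta=\dot u=0$ forces $\rho=1$ and the Finsler Clairaut relation degenerates to the Riemannian one (in line with Proposition \ref{prop_para}, where only parallels with $m'(u_0)=0$ can be $F$-geodesics). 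This justifies stating the theorem under $du/ds\neq 0$ even though the algebraic identity itself holds generically.
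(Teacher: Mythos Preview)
Your proof is correct and supplies exactly the ``straightforward computation'' that the paper leaves unwritten: the identity $p_2=g_{2i}y^i=\tfrac12\,\partial_{y^2}F^2=F\,F_{y^2}$ is the cleanest way to reach $p_2=\rho\,m^2\dot v$, and invoking Theorem~\ref{thm_F-Clairaut} for constancy is precisely what the paper intends. One minor slip in your closing discussion: when $\dot u=0$ the tangent direction is the \emph{parallel} (tangential) one, not the radial (meridian) one, so ``non-radial'' there is inverted---but this does not affect the argument.
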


Therefore {$\frac{du}{ds}$, $\frac{dv}{ds}$} are solutions of the following algebraic system:
\begin{equation}\label{algebraic system}
\begin{cases}
\rho\big\vert_{(\mathcal{P},\dot{\mathcal{P}})}\cdot m^2(u(s))\cdot\frac{dv}{ds}=\nu_F\\
\frac{\alpha^2}{\alpha-\beta}\big\vert_{(\mathcal{P},\dot{\mathcal{P}})}=1,
\end{cases}
\end{equation}
{where {$\rho(\mathcal P(s),\dot{\mathcal P}(s) )=\frac{\alpha-2\beta}{(\alpha-\beta)^2}|_{(\mathcal P(s),\dot{\mathcal P}(s) )}$}}. An explicit solution of this algebraic system involves solving a 4$^{th}$ order equation, of type $AX^4+BX^3+CX^2+D=0$, leading to a formula too complicated to be written in here, but this computation is always possible.

Instead of writing the explicit solution of \eqref{algebraic system} we point out some consequence of \eqref{p2 slope metric}.

If a unit speed $F$-geodesic $\mathcal{P}(0,a)\to M$ is tangent to the Killing vector field at its end points, that is
\begin{equation*}
\begin{split}
\dot{\mathcal{P}}(0)&=\frac{1}{F\left(\mathcal{P}(0),\frac{\partial}{\partial v}\Big\vert_{\mathcal{P}(0)}\right)}\cdot \frac{\partial}{\partial v}\Big\vert_{\mathcal{P}(0)}\quad \text{and}\\
\dot{\mathcal{P}}(a)&=\frac{1}{F\left(\mathcal{P}(a),\frac{\partial}{\partial v}\Big\vert_{\mathcal{P}(a)}\right)}\cdot \frac{\partial}{\partial v}\Big\vert_{\mathcal{P}(a)},
\end{split}
\end{equation*}
and $\dot{\mathcal{P}}(s)$ is linearly dependent with $\frac{\partial}{\partial v}\Big\vert_{\mathcal{P}(s)}$ for any $s\in(0,a)$, then Clairaut relation \eqref{p2 slope metric} implies

\begin{proposition}\label{prop_F-Clairaut}
If $\mathcal{P}:(0,a)\to M$, $\mathcal{P}(s)=(u(s),v(s))$ is an $F$-unit speed geodesic such that $\dot{\mathcal{P}}(0)$ and $\dot{\mathcal{P}}(a)$ are linear dependent vectors with $\frac{\partial}{\partial v}\Big\vert_{\mathcal{P}(0)}$ and $\frac{\partial}{\partial v}\Big\vert_{\mathcal{P}(a)}$, respectively, then
\begin{equation*}
m(u(0))=m(u(a))=\nu_F.
\end{equation*}

Moreover, $m(u(s))>m(u(0))$ for $s\in(0,a)$.
\end{proposition}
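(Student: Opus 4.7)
The plan is to separate the claim into two pieces: the boundary identity $m(u(0))=m(u(a))=\nu_F$, which comes from a direct substitution at the endpoints, and the interior strict inequality, which reduces to an algebraic inequality obtained by combining the Clairaut conservation law with the unit-speed condition $F=1$.

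\textbf{Step 1 (boundary identity).} The hypothesis $\dot{\mathcal{P}}(0)\parallel\partial/\partial v|_{\mathcal{P}(0)}$ forces $\dot u(0)=0$ and hence $\beta|_{s=0}=0$. Substituting $\beta=0$ into $F=\alpha^2/(\alpha-\beta)=1$ gives $\alpha|_{s=0}=1$, i.e.\ $m(u(0))\,|\dot v(0)|=1$. On a unit-speed curve $\alpha-\beta=\alpha^2$, so $\rho=(\alpha-2\beta)/(\alpha-\beta)^2$ evaluates to $1$ at $s=0$. Theorem \ref{thm_Clairaut_p2} then reads $\nu_F=\rho m^2\dot v|_{s=0}=m^2(u(0))\dot v(0)=\pm m(u(0))$. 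The identical computation at $s=a$ yields $\nu_F=\pm m(u(a))$, and after fixing the orientation of $\partial/\partial v$ I obtain $m(u(0))=m(u(a))=\nu_F$.

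\textbf{Step 2 (turning-point inequality).} I next aim at a pointwise bound $m(u(s))\ge\nu_F$ along the whole geodesic. The Clairaut relation $\nu_F=\rho m^2\dot v$, together with $\alpha^2=\alpha-\beta$ and $m^2\dot v^2=\alpha^2-(1+m'^2)\beta^2$ (from the definition of $\alpha$ in \eqref{slope metric on surface of revolution}), yields
\begin{equation*}
\frac{\nu_F^2}{m^2}=\frac{(\alpha-2\beta)^2\bigl[\alpha^2-(1+m'^2)\beta^2\bigr]}{\alpha^8}.
\end{equation*}
Setting $t=\beta/\alpha=\mathfrak s$ and using that $F=1$ forces $\alpha=1-t$, $\beta=t(1-t)$, a direct substitution collapses this to
\begin{equation*}
1-\frac{\nu_F^2}{m^2}=\frac{t^2\bigl[(4k+1)t^2-4(k+1)t+(k+2)\bigr]}{(1-t)^4},\qquad k:=1+m'^2.
\end{equation*}
A short discriminant calculation gives $\Delta=4(1-m'^2)$, which is strictly negative by the strong-convexity condition $(m')^2>3$ of Theorem \ref{thm_F_surface of revolution}; since $4k+1>0$, the bracket is strictly positive. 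Hence $m(u(s))^2\ge\nu_F^2$, with equality if and only if $t=0$, i.e.\ $\dot u(s)=0$.

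\textbf{Step 3 (strict inequality on $(0,a)$).} By Step 2 the only way $m(u(s_0))=\nu_F$ can hold at an interior point $s_0\in(0,a)$ is via $\dot u(s_0)=0$, and the strict monotonicity of $m$ (guaranteed by $(m')^2>3>0$) then forces $u(s_0)=u(0)$; unit speed pins down $\dot v(s_0)=\pm 1/m(u(0))$. Applying the Finslerian isometry $\varphi_{v(s_0)-v(0)}$ from Section \ref{sec_42} identifies the initial data at $s_0$ with that at $s=0$, so uniqueness of $F$-geodesics makes the trajectory $\varphi$-periodic with period $s_0$, contradicting the role of $a$ as the \emph{next} moment at which $\dot{\mathcal{P}}$ is tangent to $\partial/\partial v$. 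The key difficulty is Step 2: one has to recognise that imposing $F=1$ collapses the naive two-variable turning-point inequality into a single-variable polynomial inequality in $t=\mathfrak s$, whose sign-definiteness is controlled precisely by the convexity threshold on $m'$.
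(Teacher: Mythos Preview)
The paper offers no proof at all for this proposition: it merely asserts that the Clairaut relation \eqref{p2 slope metric} ``implies'' the result.  Your argument therefore cannot be compared step--by--step with the paper's; rather, you have supplied the missing work.  Steps~1 and~2 are correct and substantive.  In particular, the reduction in Step~2 to the single--variable polynomial
\[
(4k+1)t^{2}-4(k+1)t+(k+2),\qquad k=1+m'^{2},
\]
and the observation that its discriminant $4(1-m'^{2})$ is negative precisely because of the strong--convexity threshold $(m')^{2}>3$ of Theorem~\ref{thm_F_surface of revolution}, is exactly the kind of computation the paper omits.

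There is, however, a genuine gap in Step~3.  You close the argument by invoking ``the role of $a$ as the \emph{next} moment at which $\dot{\mathcal P}$ is tangent to $\partial/\partial v$'', but nothing in the proposition as stated guarantees this; with only the endpoint hypotheses, a periodic geodesic touching the parallel $m=\nu_F$ at several interior points is not excluded by your reasoning.  The resolution is already in the paper, though hidden behind a typo: the sentence immediately preceding Proposition~\ref{prop_F-Clairaut} should read that $\dot{\mathcal P}(s)$ is linearly \emph{in}dependent of $\partial/\partial v$ for $s\in(0,a)$ (the printed ``dependent'' would force a parallel, impossible by Proposition~\ref{prop_para}), and this is also the standing hypothesis of Theorem~\ref{thm_Clairaut_p2} which you invoke.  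Under that hypothesis $\dot u(s)\neq 0$, hence $t\neq 0$ on $(0,a)$, and your Step~2 already yields the strict inequality $m(u(s))>\nu_F$ directly; Step~3 becomes superfluous.  You should replace Step~3 by this one--line observation and flag the implicit hypothesis explicitly.
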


\section{Finslerian volumes}\label{sec_5}
It is known that the {\it Euclidean volume form} in $\R^n$ is the $n$-form
\[
dV_{\R^n}:=dx^1dx^2\dots dx^n,
\]
and the {\it Euclidean volume} of a bounded open set $D\subset \R^n$ is given by 
\[
Vol(D)=\int_D dV_{\R^n}=\int_D dx^1dx^2\dots dx^n.
\]

Obviously, if $D\subset \R^n$ is a bounded open set, $Vol(D)$ is a finite constant. 

More generally, let us consider a Riemannian manifold $(M,g)$ with the {\it Riemannian volume form} 
\[
dV_g:=\sqrt{g}dx^1dx^2\dots dx^n,
\]
and hence the {\it Riemannian volume} of $(M,g)$ can be computed as 
\[
Vol(M,g)=\int_M dV_g=\int_M \sqrt{g}dx^1dx^2\dots dx^n=\int_M \theta^1\theta^2\dots\theta^n,
\]
where $\{\theta^1,\theta^2,\dots,\theta^n\}$ is a $g$-orthonormal co-frame on $M$, and $g=\det(g_{ij})$.


In general, a {\it volume form} $d\mu$ on an $n$-dimensional  Finsler manifold $(M,F)$ is a globally defined, non-degenerate $n$-form on $M$.  In local coordinates we can always write
\begin{equation}
d\mu=\sigma(x) dx^1\wedge\dots\wedge dx^n,
\end{equation}
where $\sigma$ is a positive function on $M$. 

The usual Finslerian volumes are obtain by different choices of the function $\sigma(x)$. Here are two of the most well studied Finslerian volumes.

\bigskip

The {\it Busemann-Hausdorff} volume form is defined as 
\begin{equation}
dV_{BH}:=\sigma_{BH}(x) dx^1\wedge\dots\wedge dx^n,
\end{equation}
where 
\begin{equation}
\sigma_{BH}(x):=\frac{Vol(\B^n(1))}{Vol(\mathcal B^n_xM)},
\end{equation}
here $\B^n(1)$ is the Euclidean unit $n$-ball, $\mathcal B_x^nM=\{y:F(x,y)=1\}$ is the Finslerian ball and $Vol$ the canonical Euclidean volume. 

This volume form allows us to define the {\it Busemann-Hausdorff} volume of the Finsler manifold $(M,F)$ by 
$$
vol_{BH}(M,F)=\int_M dV_{BH}.
$$

\begin{remark}
	 Observe that the $n$-ball Euclidean volume is
		$$
		Vol(\mathbb B^n(1))=\frac{1}{n}Vol(\Sph^{n-1})=\frac{1}{n}Vol(\Sph^{n-2})\int_0^\pi \sin^{n-2}(t)dt.
		$$
	
\end{remark}
\bigskip

Another volume form naturally associated to a Finsler structure is the {\it Holmes-Thompson} volume form  defined by
\begin{equation}
dV_{HT}=\sigma_{HT}(x)dx^1,...dx^n,
\end{equation}
where 
\begin{equation}
\sigma_{HT}(x):=\frac{Vol(\mathcal B_x^nM,{g_x})}
{Vol(\mathbb B^n(1))}=
\frac{1}{Vol(\mathbb B^n(1))}\int_{\mathcal B_x^nM}(\det g_{ij}(x,y))dy^1...dy^n,
\end{equation}
and the  {\it Holmes-Thompson} volume of the Finsler manifold $(M,F)$ is defined as 
$$
vol_{HT}(M,F)=\int_M dV_{HT}.
$$

\begin{remark}
	\begin{enumerate}
		\item 	If $(M,F)$ is an absolute homogeneous Finsler manifold, then
		 the Busemann-Hausdorff volume is a Hausdorff measure of $M$, and we have
		$$
		vol_{BH}(M,F)\geq vol_{HT}(M,F).
		$$
		(see \cite{Du}).
		
		\item If $(M,F)$ is not absolute homogeneous, then the inequality above is not true anymore. Indeed, for instance let $(M,F=\alpha+\beta)$ be a Randers space. Then, one can easily see that 
		$$
		vol_{BH}(M,F)=\int_M (1-b^2(x))dV_\alpha\leq vol(M,\alpha)=vol_{HT}(M,F),
		$$
		where $b^2(x)=a_{ij}(x)b^ib^j$, and $vol(M,\alpha)$ is the Riemannian volume of $M$ (see \cite{S}). 
\end{enumerate}
\end{remark}
		
		In the case of an Finsler $(\alpha,\beta)$-metric, one can compute explicitly the Finslerian volume in terms of the Riemannian volume (see \cite{BCS}). Indeed, if $(M,F(\alpha,\beta))$ is an $(\alpha,\beta)$-metric on an $n$-dimensional manifold $M$, one denotes 
	\begin{equation}\label{vol coeff functions}
	\begin{split}
	&f(b):=\frac{\int_0^\pi \sin^{n-2}(t)dt}{\int_{0}^\pi\frac{\sin^{n-2}(t)}{\phi(b\cos(t))^n}dt}\\
	& g(b):=\frac{\int_0^\pi\sin^{n-2}(t) T(b\cos t)dt }{\int_0^\pi \sin^{n-2}(t)dt},
	\end{split}
	\end{equation}
where $F=\alpha \phi(\mathfrak s)$, $\mathfrak s=\beta \slash \alpha$, and
$$
T(\mathfrak s):=\phi(\phi-\mathfrak s\phi')^{n-2}[(\phi-\mathfrak s\phi')+(b^2-\mathfrak s^2)\phi''].
$$ 

Then the Busemann-Hausdorff and Holmes-Thompson volume forms are given by 
$$
dV_{BH}=f(b)dV_\alpha,\textrm{ and } dV_{HT}=g(b)dV_\alpha,
$$		
respectively, where $dV_\alpha$ is the Riemannian volume form. 		
		
It is remarkable that if the function $T(\mathfrak s)-1$ is an odd function of $\mathfrak s$, then $dV_{HT}=dV_\alpha$. This is the case of Randers metrics (see \cite{BCS}), but not the case of the slope metric.  

The following lemma is elementary.
\begin{lemma}\label{lem: monoton f,g,h}
	Let us consider the following functions
	\begin{enumerate}
		\item $f:(0,\frac{1}{2})\to (\frac{8}{9},1)$, $f(b):=\frac{2}{2+b^2}$,
		\item $g:(0,\frac{1}{2})\to (\frac{5\sqrt{3}}{9},1)$, $g(b):=\frac{(2-3b^2)}{2(1-b^2)\sqrt{1-b^2}}$,
		\item $h:(0,\frac{1}{2})\to (1,{\frac{5\sqrt{3}}{8}})$, $h(b):=\frac{(2+b^2)(2-3b^2)}{4(1-b^2)\sqrt{1-b^2}}$.
	\end{enumerate}
Then, $f$ and $g$ are both monotone decreasing while $h$ is monotone increasing on the given intervals. 	
\end{lemma}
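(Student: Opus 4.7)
The plan is to reduce all three monotonicity claims to computations with the single variable $u:=b^{2}\in(0,1/4)$, since $b\mapsto b^{2}$ is a monotone increasing bijection onto $(0,1/4)$.

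For part (1), the function $f(b)=2/(2+b^{2})$ is the reciprocal of the strictly increasing function $2+b^{2}$, so $f$ is strictly decreasing. Checking endpoints $f(0^{+})=1$ and $f((1/2)^{-})=8/9$ confirms the stated range.

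For part (2), I rewrite $g$ as $G(u):=(2-3u)/[2(1-u)^{3/2}]$ and differentiate. Bringing the numerator of $G'(u)$ to a common factor of $(1-u)^{1/2}$ I expect to get
\begin{equation*}
G'(u)=\frac{-6(1-u)+3(2-3u)}{4(1-u)^{5/2}}=\frac{-3u}{4(1-u)^{5/2}},
\end{equation*}
which is strictly negative on $(0,1/4)$. Since $u=b^{2}$ is increasing, $g(b)=G(b^{2})$ is strictly decreasing. Evaluating at $u=0$ and $u=1/4$ produces the stated range endpoints $1$ and $5\sqrt{3}/9$.

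For part (3), I set $H(u):=(2+u)(2-3u)/[4(1-u)^{3/2}]=(4-4u-3u^{2})/[4(1-u)^{3/2}]$ and compute $H'(u)$ the same way. After factoring out $(1-u)^{1/2}$ from the numerator, the bookkeeping should collapse to
\begin{equation*}
H'(u)=\frac{-4(4+6u)(1-u)+6(4-4u-3u^{2})}{16(1-u)^{5/2}}=\frac{3u^{2}-16u+4}{8(1-u)^{5/2}}.
\end{equation*}
The denominator is positive, so I only need that the quadratic $P(u):=3u^{2}-16u+4$ is positive on $(0,1/4)$. Its roots are $u_{\pm}=(16\pm\sqrt{208})/6$, and the smaller one satisfies $u_{-}>1/4$ (since $P(1/4)=3/16>0$ and $P$ is decreasing near $0$), so $P>0$ throughout $(0,1/4)$. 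Thus $h$ is strictly increasing. Finally $h(0^{+})=1$ and $h((1/2)^{-})=(9/4)(5/4)/[4(3/4)^{3/2}]=5\sqrt{3}/8$ give the range.

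The only genuine obstacle is the algebra in part (3): writing $H'(u)$ with a common denominator and then verifying positivity of the resulting quadratic on the correct subinterval. Parts (1) and (2) are essentially one line each; part (3) is a careful but bounded calculation, and I expect no conceptual difficulty beyond arithmetic care.
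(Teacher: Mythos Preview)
Your proof is correct; the calculus computations for $G'(u)$ and $H'(u)$ check out, and the positivity of $P(u)=3u^{2}-16u+4$ on $(0,1/4)$ follows since $P$ is decreasing there (vertex at $u=8/3$) with $P(1/4)=3/16>0$. The paper gives no proof at all, merely declaring the lemma ``elementary,'' so your direct verification via the substitution $u=b^{2}$ is precisely the routine argument the authors omit.
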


A direct application of this lemma is the following theorem.

	\begin{theorem}\label{thm: area comparison}
		Let $(M,F)$ be a slope metric on a surface of revolution. Then
			\begin{equation*}
			Area_{BH}(D)<Area_{HT}(D)<Area_\alpha(D)
			\end{equation*}
		for any bounded region $D\subset M$.
	\end{theorem}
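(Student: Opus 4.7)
The plan is to reduce the area comparison to a pointwise comparison of integrands by invoking the identities $dV_{BH} = f(b)\,dV_\alpha$ and $dV_{HT} = g(b)\,dV_\alpha$ recalled just above, together with the explicit formulas for $f,g,h$ from Lemma \ref{lem: monoton f,g,h}. The key observation is that for the slope metric $\phi(\mathfrak{s}) = 1/(1-\mathfrak{s})$ in dimension $n=2$, the three functions named in Lemma \ref{lem: monoton f,g,h} are exactly the coefficient functions arising in \eqref{vol coeff functions} and their ratio $h = g/f$.

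To verify this, I would specialize \eqref{vol coeff functions} to $n=2$, where the $\sin^{n-2}$ factors are trivial. Since $1/\phi(\mathfrak{s})^2 = (1-\mathfrak{s})^2$, the defining integral for $f$ reduces to
$$
f(b) = \frac{\pi}{\int_0^\pi (1 - b\cos t)^2\, dt} = \frac{2}{2 + b^2},
$$
by direct expansion and the vanishing of $\int_0^\pi \cos t\,dt$. With $\phi - \mathfrak{s}\phi' = (1 - 2\mathfrak{s})/(1-\mathfrak{s})^2$ and $\phi'' = 2/(1-\mathfrak{s})^3$, the quantity $T(\mathfrak{s})$ becomes a rational function of $\mathfrak{s}$; substituting $\mathfrak{s} = b\cos t$ and integrating term by term in closed form over $[0,\pi]$ should reproduce $g(b) = (2-3b^2)/(2(1-b^2)\sqrt{1-b^2})$, and consequently $g(b)/f(b) = h(b)$.

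Next, I would read off $b^2(x)$ for the slope metric on the surface of revolution. From \eqref{slope metric on surface of revolution} we have $b_1 = 1, b_2 = 0$, so the inverse of \eqref{Riemannian metric} gives
$$
b^2(x) = a^{11}(x) = \frac{1}{1 + m'(u)^2}.
$$
The strict convexity condition $(m')^2 > 3$ of Theorem \ref{thm_F_surface of revolution} then forces $0 < b(x) < 1/2$ at every $x \in M$, so the pointwise inequalities of Lemma \ref{lem: monoton f,g,h} apply everywhere: $f(b(x)) < 1$, $g(b(x)) < 1$, and $h(b(x)) = g(b(x))/f(b(x)) > 1$, i.e.\ $f(b(x)) < g(b(x)) < 1$. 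Integrating this chain over $D$ against the positive Riemannian form $dV_\alpha$ yields
$$
Area_{BH}(D) = \int_D f(b)\,dV_\alpha < \int_D g(b)\,dV_\alpha = Area_{HT}(D) < \int_D dV_\alpha = Area_\alpha(D),
$$
which is the statement.

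The only non-routine step is the closed-form computation of the integral defining $g(b)$ for the slope metric, which requires patiently integrating rational functions of $\cos t$ over $[0,\pi]$; once $f$ and $g$ are identified with the explicit rational expressions of Lemma \ref{lem: monoton f,g,h}, the pointwise estimates propagate to the integrated inequalities without further difficulty.
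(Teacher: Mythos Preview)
Your proposal is correct and follows essentially the same route as the paper: identify the coefficient functions in \eqref{vol coeff functions} with the explicit $f$, $g$, $h$ of Lemma~\ref{lem: monoton f,g,h}, then use the pointwise bounds coming from $b\in(0,\tfrac12)$ to obtain the integrated inequalities. The paper carries out the $g(b)$ integral via the substitution $\tau=b\cos t$ and a partial-fractions splitting of $T(\tau)/\sqrt{b^2-\tau^2}$, whereas you propose integrating rational functions of $\cos t$ directly; either method yields the same closed form, and your explicit verification that $b^2=1/(1+m'^2)<1/4$ from \eqref{convex condition on surface of revolution} makes the applicability of Lemma~\ref{lem: monoton f,g,h} slightly more transparent than in the paper.
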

	\begin{proof}
			Firstly, observe that in the case of a slope metric, formulas \eqref{vol coeff functions} imply
		\begin{equation*}
		\begin{split}
				f(b) & =\frac{\pi}{\int_0^\pi(1-b\cos t)^2 \cdot dt}=\frac{2}{2+b^2} \\
	g(b) & =\frac{(2-3b^2)}{2(1-b^2)\sqrt{1-b^2}}.
		\end{split}
		\end{equation*}
		Indeed, we have
		\begin{equation*}
		g(b)=\frac{1}{\pi}\int_0^\pi T(b\cos t)dt=\frac{1}{\pi}\int_{-b}^b\frac{T(\tau)}{\sqrt{b^2-\tau^2}}\cdot d\tau,
		\end{equation*}
		where we use the substitution $\tau=b\cdot \cos t$.

		If we write
		\begin{equation*}
		\frac{T(\tau)}{\sqrt{b^2-\tau^2}}=\frac{-1+2\tau}{(\tau-1)^3}\cdot\frac{1}{\sqrt{b^2-\tau^2}}+\frac{2\sqrt{b^2-\tau^2}}{(\tau-1)^4}
		\end{equation*}
		it is not difficult to see that
		\begin{equation*}
		\begin{split}
		&\int_{-b}^b \frac{-1+2\tau}{(\tau-1)^3}\cdot\frac{1}{\sqrt{b^2-\tau^2}}\cdot d\tau=\frac{(2-5b^2)\pi}{2(b^2-1)\sqrt{1-b^2}},\quad \text{and}\\
		&\int_{-b}^b \frac{2\sqrt{b^2-\tau^2}}{(\tau-1)^4}\cdot d\tau = \frac{\pi b^2}{\sqrt{1-b^2}(1-b)^2},
		\end{split}
		\end{equation*}
		hence formula for $g(b)$ follows. Therefore, the functions $f$ and $g$ given in Lemma 
		\ref{lem: monoton f,g,h} are exactly those defined by \eqref{vol coeff functions} in the case of the slope metric.

		It results
		\begin{equation*}
		\begin{split}
		dV_{BH} & =f(b)dV_\alpha=\frac{2}{2+b^2}\cdot dV_\alpha,\\
		dV_{HT} & =g(b)dV_\alpha=\frac{(2-3b^2)}{2(1-b^2)\sqrt{1-b^2}}\cdot dV_\alpha,\\
		dV_{HT} & =h(b) dV_{BH}= \frac{(2+b^2)(2-3b^2)}{4(1-b^2)\sqrt{1-b^2}} \cdot dV_{BH},
		\end{split}
		\end{equation*}
		so  the meaning of the function $h$ in  Lemma 
		\ref{lem: monoton f,g,h} is clear now. 
		
		By taking into account the monotonicity of $f$, $g$, $h$ described in Lemma 
		\ref{lem: monoton f,g,h}, the inequalities stated above hold good.
		
		$\qedd$
	\end{proof}

	Moreover, from Lemma 
	\ref{lem: monoton f,g,h}  we have 
	\begin{theorem}\label{thm: area comparison2}
	Let $(M,F)$ be a slope metric on a surface of revolution. Then
\begin{enumerate}
\item $\frac{8}{9}Area_\alpha (D)\leq Area_{BH}(D)\leq Area_\alpha (D)$,
\item $\frac{5\sqrt{3}}{9}Area_\alpha (D)\leq Area_{HT}(D)\leq Area_\alpha (D)$,
\item $Area_{BH} (D)\leq Area_{HT}(D)\leq {\frac{5\sqrt{3}}{8}}Area_{BH} (D)$,
\end{enumerate}
	for any bounded region $D\subset M$.
\end{theorem}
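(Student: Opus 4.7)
The plan is to leverage the pointwise density formulas already derived in the proof of Theorem \ref{thm: area comparison} together with the numerical bounds on $f$, $g$, $h$ provided by Lemma \ref{lem: monoton f,g,h}. The entire argument reduces to an integration of pointwise inequalities over the bounded region $D$.

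First, I would record the key fact that the strongly convex slope metric on a surface of revolution satisfies $b(x) < \tfrac{1}{2}$ everywhere on $M$; this is exactly the convexity condition of Theorem \ref{thm_F_surface of revolution} (equivalently $b < \tfrac{1}{2}$, see Remark after Proposition \ref{prop_convex f}). Therefore at every point $x \in M$ the scalar $b = b(x)$ lies in the interval $[0, \tfrac{1}{2})$, which is precisely the domain on which Lemma \ref{lem: monoton f,g,h} supplies monotonicity and explicit numerical range for each of $f$, $g$, $h$.

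Second, from the proof of Theorem \ref{thm: area comparison} the three Finslerian volume forms compare pointwise as
\begin{equation*}
dV_{BH} = f(b)\, dV_\alpha, \qquad dV_{HT} = g(b)\, dV_\alpha, \qquad dV_{HT} = h(b)\, dV_{BH}.
\end{equation*}
Applying the ranges from Lemma \ref{lem: monoton f,g,h}, namely $f(b) \in (\tfrac{8}{9}, 1]$, $g(b) \in (\tfrac{5\sqrt{3}}{9}, 1]$, and $h(b) \in [1, \tfrac{5\sqrt{3}}{8})$ for $b \in [0,\tfrac{1}{2})$, these identities become pointwise two-sided bounds on the respective densities relative to $dV_\alpha$ and $dV_{BH}$.

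Third, I would integrate each of the three pointwise estimates over the bounded region $D$. Since the densities and the Riemannian volume form are non-negative, integration preserves the inequalities and yields directly the three statements of the theorem. The main (and only) obstacle is essentially bookkeeping — making sure the numerical endpoints $\tfrac{8}{9}$, $\tfrac{5\sqrt{3}}{9}$, $\tfrac{5\sqrt{3}}{8}$ coming from evaluating $f$, $g$, $h$ at $b = \tfrac{1}{2}$ match those in the theorem statement and observing that the reverse endpoint value $1$ is attained in the limit $b \to 0$, where the slope metric degenerates to the underlying Riemannian $\alpha$. No further geometric input is needed beyond Lemma \ref{lem: monoton f,g,h} and the density identities established in the proof of Theorem \ref{thm: area comparison}.
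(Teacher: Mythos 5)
Your proposal is correct and follows essentially the same route as the paper: the paper derives Theorem \ref{thm: area comparison2} directly from the density identities $dV_{BH}=f(b)dV_\alpha$, $dV_{HT}=g(b)dV_\alpha$, $dV_{HT}=h(b)dV_{BH}$ established in the proof of Theorem \ref{thm: area comparison}, combined with the ranges of $f$, $g$, $h$ on $(0,\tfrac{1}{2})$ given by Lemma \ref{lem: monoton f,g,h}, and then integrates over $D$. Your bookkeeping of the endpoint values $\tfrac{8}{9}$, $\tfrac{5\sqrt{3}}{9}$, $\tfrac{5\sqrt{3}}{8}$ at $b=\tfrac{1}{2}$ and the value $1$ as $b\to 0$ matches the lemma exactly.
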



\end{document}